\numberwithin{equation}{section}
\newtheorem{theo}{Theorem}
\newtheorem{coro}{Corollary}
\newtheorem{prop}{Proposition}
\newtheorem{lem}{Lemma}
\newtheorem{defi}{Definition}
\theoremstyle{remark}
\newtheorem*{Remark}{Remark}
\newtheorem*{Remarks}{Remarks}
\def\al{\alpha}
\def\ze{\zeta}
\def\Li{\textup{Li}}
\def\({\left(}
\def\){\right)}
\def\[{\left[}
\def\]{\right]}
\def\lcm{\operatorname{lcm}}
\def\dd{\textup{d}}
\def\Qb{\overline{\mathbb Q}}
\newcommand{\N}{\mathbb{N}}
\newcommand{\Z}{\mathbb{Z}}
\newcommand{\Q}{\mathbb{Q}}
\newcommand{\R}{\mathbb{R}}
\newcommand{\C}{\mathbb{C}}
\newcommand{\K}{\mathbb{K}}
\newcommand{\Qbar}{\overline{\mathbb Q}}
\newcommand{\etoile}{^\star}
\newcommand{\unp}{\{1,\ldots,p\}}
\newcommand{\unq}{\{1,\ldots,q\}}
\newcommand{\unmu}{\{1,\ldots,\mu\}}
\newcommand{\deuxs}{\{2,\ldots,s\}}
\newcommand{\troiss}{\{3,\ldots,s\}}
\newcommand{\zerodmu}{\{0,\ldots,d-1\}}
\newcommand{\calD}{{\mathscr{D}}}
\newcommand{\calN}{{\mathscr{N}}}
\newcommand{\calS}{{\mathscr{S}}}
\newcommand{\cti}{d}
\newcommand{\eps}{\varepsilon}
\newcommand{\moins}{\setminus}
\newcommand{\GAC}{\mathbf{G}}
\newcommand{\GACQ}{\GAC_\Q}
\newcommand{\GACK}{\GAC_\K}
\newcommand{\GACQdei}{\GAC_{\Q(i)}}
\newcommand{\GACKdei}{\GAC_{\K(i)}}
\newcommand{\GACQbar}{\GAC_{\Qbar}}
 \newcommand{\GACKinterR}{\GAC_{\K \cap \R}}
 \newcommand{\GACQbarinterR}{\GAC_{\Qbar \cap \R}}
\newcommand{\GACRK}{\GAC_{R, \K}}
\newcommand{\GACRKdei}{\GAC_{R, \K(i)}}
\newcommand{\GACRQdei}{\GAC_{R, \Q(i)}}
\newcommand{\GC}{\mathbf{G}^{\rm a.c.}}
\newcommand{\GCQ}{\GC_\Q}
\newcommand{\GCK}{\GC_\K}
\newcommand{\GCQbar}{\GC_{\Qbar}}
\newcommand{\fff}[1]{{\rm Frac}( #1 )}
\renewcommand{\Re}{\textup{Re}}
\renewcommand{\Im}{\textup{Im}}
\newcommand{\cvr}{radius of convergence }
\newcommand{\cvrparfervir}{radius of convergence), }
\newcommand{\cvrvirg}{radius of convergence, }
\newcommand{\cvrs}{radii of convergence }
\begin{document}

\title{On the values of $G$-functions}

\author{S. Fischler and T. Rivoal}
\date{}

\maketitle

{\centerline{\em \`A la m\'emoire de Philippe Flajolet}}

\begin{abstract}
Let $f$ be a $G$-function (in the sense of Siegel), and $\alpha$ be an 
algebraic number; assume that the value $f(\alpha)$ is a real number. 
As a special case of a more general result, 
we show that this number 
can be written as $g(1)$, where $g$ is a $G$-function with rational 
coefficients  and arbitrarily large radius of convergence. 
As an application, we prove that quotients of such values are exactly the 
numbers which can be written as limits of sequences $a_n/b_n$, 
where $\sum_{n=0}^{\infty} a_n z^n$ and $\sum_{n=0}^{\infty} b_n z^n$ are $G$-functions 
with rational coefficients. This result provides 
a general setting for irrationality proofs in the style of Ap\'ery for $\zeta(3)$, and gives 
answers to questions asked by T. Rivoal in 
[{\em Approximations rationnelles des valeurs de la 
fonction Gamma aux rationnels : le cas des puissances}, 
Acta Arith. {\bf 142} (2010), no. 4, 347--365].
\end{abstract}

\section{Introduction}

This paper belongs 
to the arithmetic theory of $G$-functions, but 
not exactly in the usual Diophantine sense described just below. 
These functions are power series occurring frequently in analysis, number theory, geometry 
or even physics: for example, 
algebraic functions over $\Qb(z)$, polylogarithms, Gauss' hypergeometric function are 
$G$-functions. The exponential function is not a $G$-function 
but an $E$-function. Both classes of functions have originally been introduced by 
Siegel~\cite{siegel}. 

Throughout this paper we fix an embedding of 
$\Qbar$ into $\C$; all algebraic numbers 
and all convergents series are considered in~$\C$.
\begin{defi} \label{def:gfunc}
A $G$-function $f$ is a formal power series $f(z)=\sum_{n=0}^{\infty} a_n z^n$ 
such that the coefficients $a_n$ are algebraic numbers and there exists $C>0$ such that:
\begin{enumerate}
\item[$(i)$] the maximum of the moduli of the conjugates of $a_n$ is $\leq C^n$.

\item[$(ii)$] there exists a sequence of integers $d_n$, with $\vert d_n \vert \leq C^n$,  such that 
$d_na_m$ is an algebraic integer for all~$m\le n$.

\item[$(iii)$] $f(z)$ satisfies a homogeneous linear differential equation with 
coefficients in $\Qb(z)$.~$($\footnote{All differential equations considered in this text are homogeneous 
and consequently we will no longer mention the term ``homogeneous''.}$)$
\end{enumerate}
\end{defi}
The class of $E$-functions is defined similarly: in Definition~\ref{def:gfunc}, 
replace $a_n$ with $a_n/n!$ in $f(z)$, and leave the rest unchanged. 
Condition $(i)$ ensures that a non-polynomial $G$-function has a finite non-zero radius of convergence 
at $z=0$. Condition $(iii)$ ensures that in fact the coefficients $a_n$, $n\ge 0$, all belong to 
a same number field.  Classical references on $G$-functions 
are the books~\cite{andre} and~\cite{dgs}.

Siegel's goal was to find conditions ensuring that $E$ and $G$-functions take irrational or transcendental 
values at algebraic points: the picture is very well understood for $E$-functions but largely unknown  
for $G$-functions. The main tool to study the nature of 
values of $G$-functions is inexplicit Pad\'e type approximation 
(see~\cite{andre4, bombieri, chud, galosh}).  
In an explicit form, Pad\'e approximation is also behind Ap\'ery's celebrated 
proof~\cite{Apery} of the irrationality 
of $\zeta(3)$, and similar results in specific cases (see for instance~\cite{beukers4, firi}). 

In this paper, we are not directly interested in Diophantine  
questions in the above sense, even though this is our motivation: we refer the reader to Remark b) following 
Theorem~\ref{theo:10} and to \S\ref{ssec:dio} for some comments on this aspect. 
We are primarily interested in  
the type of numbers for which one can find an approximating 
sequence constructed in Ap\'ery's spirit, even if no irrationality result can be deduced from this sequence.
It turns out that the set of these numbers can be described very simply 
in terms of the set of values of $G$-functions with algebraic Taylor coefficients 
at  algebraic points (see Theorem~\ref{theo:10}). Before that, we 
prove that the latter set coincides with the set of values  at $z=1$  of $G$-functions with 
Taylor coefficients in $\mathbb Q(i)$ and  radius of convergence $>1$  (see Theorem~\ref{theo:20}, 
where stronger assertions are stated). We don't know if a similar one holds for values 
of $E$-functions, and we present in \S\ref{ssec:misc3} some issues in this case.
 
Throughout this text, algebraic extensions of $\Q$ are always embedded 
into $\Qbar \subset \C$; they can be either finite or infinite.
\begin{defi}\label{defilem:1} Given an algebraic extension $\mathbb{K}$ of $\mathbb Q$, 
we denote by $\GC_{\mathbb K}$  the set of all values, at points in $\K$, of  
multivalued analytic continuations of $G$-functions  with Taylor coefficients at $0$ in $\mathbb{K}$.
\end{defi} 
For any $G$-function  $f$  with 
coefficients in $\mathbb{K}$ and any $\alpha \in \K$, we consider 
all values of $f(\alpha)$ obtained by analytic continuation. If $\alpha$ is a 
singularity of $f$, then we consider also these values  if they are finite. In this situation 
$f(\alpha z)$ is also a $G$-function    with  
coefficients in $\mathbb{K}$ so that we may restrict ourselves 
to the values at the point 1. 
By Abel's theorem,  $\GC_{\mathbb K}$  contains all convergent series 
$\sum_{n=0}^\infty a_n \alpha^n $ where  $f(z)  = \sum_{n=0}^\infty a_n z^n $ 
is a $G$-function  with coefficients in $\mathbb{K}$  and $\alpha \in \K$.

\begin{defi}\label{defilem:1bis} Given an algebraic extension $\mathbb{K}$ of $\mathbb Q$, 
we denote by $\GAC_{\mathbb K}$ the set of all $\xi \in \C$ such that, for 
any $R \geq 1$, there exists   a $G$-function   $f$  with Taylor coefficients at $0$ 
in $\mathbb{K}$ and \cvr $>R$ such that $\xi = f(1)$.
\end{defi} 
For  any $R \geq 1$, we denote by $\GACRK$  the set of all $\xi = f(1)$ 
where $f$ is    a $G$-function  with Taylor coefficients at $0$ in $\mathbb{K}$ 
and \cvr $>R$. In this way we have $\GACK = \cap_{R \geq 1} \GACRK$, and 
also  $\GACRK \subset \GCK$ for any $R \geq 1$.

\medskip

The set of $G$-functions has many algebraic 
properties. For example, it is a ring and a $\Qb[z]$-algebra for the 
usual addition and Cauchy multiplication of power series; it is also stable under the Hadamard 
product, i.e., pointwise multiplication of the coefficients of two power series. 
Such algebraic
properties  translate easily  to the set $\GACK$ (which is therefore a ring, 
see Lemma~\ref{lem:algebra}  for other structural properties), but not immediately 
to  $\GC_{\mathbb K}$.

\medskip

Our first result is that $\GCK $ is independent from $\K$. Concerning  $\GACK$, there is an obvious remark: 
 if $\K \subset \R $ then   
$\GACK \subset \R$. Apart from this,  $\GACK$ is independent from $\K$, and equal (up to taking real parts) to   $\GCK $. The precise statement is
the following.

\begin{theo}\label{theo:20} 
Let $\K$ be an algebraic extension of $\Q$. Then:
\begin{itemize}
\item We have $\GCK= \GCQ=\GCQbar=  \GACQ + i \GACQ.$
\item If $\K \not\subset \R$ then $ \GACK= \GACQ + i \GACQ$ ; if $ \K \subset \R$ then  $\GACK= \GACQ$.
\end{itemize}
\end{theo}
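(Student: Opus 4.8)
The plan is to prove the two chains of equalities by establishing a handful of inclusions, the key one being the passage from a $G$-function value at an arbitrary algebraic point to a $G$-function value at $z=1$ with large radius of convergence. I would organize this around the following elementary reductions and one substantive construction.

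\medskip

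\noindent\textbf{Reduction to real parts and to a single number field.} First I would record the obvious inclusions $\GACQ \subseteq \GACK$ and $\GCQ \subseteq \GCK \subseteq \GCQbar$ coming directly from the definitions, and the inclusion $\GACRK \subseteq \GCK$ stated in the excerpt. For the reverse, I would use that any $G$-function has all its Taylor coefficients in a \emph{fixed} number field $K_0$ (a consequence of Definition~\ref{def:gfunc}(iii), as noted after the definition), so without loss of generality $\K$ may be taken to be a number field; then, writing an element of $\GCK$ as $f(\alpha)$ and splitting $f(\alpha z)$ into real and imaginary parts by averaging over complex conjugation (which sends a $G$-function to a $G$-function, since the differential equation and the growth conditions are preserved), I get $f(1)=u+iv$ with $u,v$ real values of $G$-functions with coefficients in $\K\cap\R$ after further descent. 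The heart of the matter is to show each such real value lies in $\GACQ$: this is exactly where Theorem~\ref{theo:20} is more than bookkeeping, and I expect the paper has proved (or will prove) precisely the statement ``a real value $f(\alpha)$ of a $G$-function equals $g(1)$ for a $G$-function $g$ with rational coefficients and arbitrarily large radius of convergence'' — this is the abstract's main result. Granting that (call it the Key Construction), the inclusion $\GCQbar \subseteq \GACQ + i\GACQ$ follows, and combined with $\GACQ + i\GACQ \subseteq \GACQbar \subseteq \GCQbar$ (using that $\GACK$ rings and $i\in\GACQbar$ trivially via a polynomial) one closes the first chain $\GCK=\GCQ=\GCQbar=\GACQ+i\GACQ$.

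\medskip

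\noindent\textbf{The Key Construction.} Here is how I would attack it. Given a $G$-function $f(z)=\sum a_n z^n$ with rational (or real algebraic) coefficients, algebraic $\alpha$, and real $f(\alpha)$, fix a target $R\geq 1$. The idea is to build a new $G$-function whose value at $1$ is $f(\alpha)$ and whose radius of convergence exceeds $R$. The natural device is a Pad\'e-type or ``acceleration'' trick: one seeks an auxiliary $G$-function series $\sum b_n z^n$ with large radius of convergence whose value at $1$ is $f(\alpha)$. A clean way: the minimal nonzero differential operator $L$ annihilating $f$ has a point $\alpha$ in its domain; one performs a change of variable $z\mapsto \alpha + (1-z)(\text{something})$ or, more robustly, considers the expansion of $f$ at a point $\beta$ close to $\alpha$ on the segment from $0$ to $\alpha$, obtaining a $G$-function in $z-\beta$ (André's theorem that local solutions of $G$-operators at algebraic points are $G$-functions), and iterates this analytic continuation in finitely many steps until the ball of convergence around the final base point, rescaled to be centered so that evaluation at $1$ gives $f(\alpha)$, has radius $>R$. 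The point is that each local expansion of a $G$-operator solution is again a $G$-function with coefficients in a number field, and by pushing the base point we can make the (rescaled) radius as large as we like, at the cost of enlarging the number field — which is then removed by the conjugation-averaging of the previous paragraph, since $f(\alpha)$ is assumed real. I would expect the paper to handle the coefficient field descent carefully, perhaps using a trace from $\K$ to $\Q$.

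\medskip

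\noindent\textbf{The $\GACK$ statement and the main obstacle.} For the second bullet: if $\K\subset\R$ then $\GACK\subseteq\R$ trivially (real coefficients, evaluation at the real point $1$), and $\GACQ\subseteq\GACK$ is obvious; the reverse $\GACK\subseteq\GACQ$ for real $\K$ follows from the Key Construction applied to $\xi=f(1)$ with $\alpha=1$, since such $\xi$ is automatically real. If $\K\not\subset\R$, one shows $\GACK\subseteq\GACQ+i\GACQ$ again by conjugation-averaging (for each $R$, split the defining $G$-function with radius $>R$ into real and imaginary parts, each with coefficients in $\K\cap\R$ and radius $>R$, then apply the real case), and the reverse $\GACQ+i\GACQ\subseteq\GACK$ because $\GACK$ is a ring (Lemma~\ref{lem:algebra}) containing $\GACQ$ and containing $i$ (the series $i\cdot 1$ is a $G$-function with coefficients in $\K$, radius $\infty$). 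The main obstacle is unquestionably the Key Construction — specifically, making the radius of convergence genuinely \emph{arbitrarily} large while keeping the coefficients algebraic of controlled denominators and conjugate size, i.e.\ verifying that the iterated local-expansion-and-rescale procedure still satisfies all three conditions of Definition~\ref{def:gfunc}; the André/Chudnovsky machinery on $G$-operators (the fact that $G$-operators have only regular singularities with rational exponents, and that their solutions near algebraic points are $G$-functions) is what makes this go through, and I would lean on it heavily.
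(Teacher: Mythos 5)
Your reductions and bookkeeping (conjugation-averaging, the ring structure of $\GACK$, the real/non-real dichotomy) are sound and close to what the paper does in Lemmas~\ref{lem:algebra} and~\ref{lem3nv}. But the sketch you give of the Key Construction has a genuine gap, and it is exactly the point where the paper has to work hardest. You propose to expand $f$ at an intermediate algebraic point $\beta$ on the way to $\alpha$, citing Andr\'e's theorem that ``local solutions of $G$-operators at algebraic points are $G$-functions,'' and treat the continued $f$ as a $G$-function in $z-\beta$. That is not what Andr\'e--Chudnovsky--Katz gives you: it produces a \emph{basis} of local solutions $g_1,\dots,g_\mu$ at $\beta$ which are $G$-functions, but the analytic continuation of $f$ itself is a linear combination $f=\sum_j\varpi_j g_j$ whose connection constants $\varpi_j$ are, in general, transcendental. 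So the Taylor coefficients of (continued) $f$ at $\beta$ are \emph{not} algebraic, no matter how you enlarge the number field, and conjugation-averaging cannot rescue this. Your iterated ``expand, rescale, enlarge the field'' picture therefore does not yield a $G$-function with algebraic coefficients whose value at $1$ is $f(\alpha)$.

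What the paper actually proves --- and what closes this gap --- is Theorem~\ref{theo:3}: the connection constants $\varpi_j$ lie in $\GAC_{\K(i)}$. Its proof does walk along a chain of overlapping disks from $0$ to $\zeta$ (roughly the picture you have in mind), but it solves for each transition matrix via Cramer's rule, using Lemma~\ref{lem:wron} to control the Wronskian and the inductive knowledge that the entries at the previous stage are themselves in $\GAC_{R,\K(i)}$. The route from there to $\GCQbar\subset\GACQdei$ is also structured differently than you suggest: the paper first decomposes $f=\sum_{j=0}^{d-1}\beta^j U_j$ with each $U_j$ having \emph{rational} coefficients (a $G$-function by Dwork--Gerotto--Sullivan), applies Theorem~\ref{theo:3} to each $U_j$ at $\zeta=1$, and then uses Lemma~\ref{lem:regul} to identify $\xi=f(1)$ as the leading coefficient $c\in\GACQdei$ of the local expansion. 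Two further items you did not anticipate: when $\alpha$ is a \emph{singularity} on the circle of convergence (the interesting case), there is no disk around $\alpha$ on which $f$ is holomorphic, so your rescale step has no radius to play with --- the paper handles this by expanding in $\log(1-z)$ and fractional powers of $(1-z)$ and reading off the constant term; and the inclusion $\GACQ+i\GACQ\subset\GCK$ when $\K\subset\R$ requires a separate device (the paper multiplies by $\sqrt[4]{1-z/2}$ and loops around $z=2$ to produce $i$), since a $G$-function with \emph{real} algebraic coefficients cannot take a non-real value at $z=1$ without analytic continuation. Finally, a small point: for $\K\not\subset\R$ the constant $i$ need not lie in $\K$ (e.g.\ $\K=\Q(\sqrt{-3})$), so $i\in\GACK$ requires the short argument of Lemma~\ref{lem3nv}(ii), not merely ``$i$ is a constant series with coefficients in $\K$.''
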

One of the consequences of this 
theorem is that $\GACK$ contains $\Qbar \cap \R$, 
 and even $\Qbar$ if $\K \not\subset \R$. We also deduce that the 
set of values of $G$-functions $\sum_{n=0}^{\infty} a_n z^n$ with $a_n \in \K$ at points 
$z \in \K$ inside the disk of convergence (respectively at points where this series 
is absolutely convergent, respectively convergent) is equal to $\GACK$. 

\medskip

In~\cite[p. 350]{rivoal}, the second author introduced the notion of 
rational $G$-approximations to a real number. This corresponds to assertion 
$(ii)$ (with $\K = \Q$) in the next result, which provides 
a characterization of numbers admitting  rational $G$-approximations. 
This provides answers to questions asked in~\cite[p. 351]{rivoal}. 

Given a subring ${\mathbb A} \subset \mathbb C$, we denote by $\fff{{\mathbb A}}$ the field of 
fractions  of ${\mathbb A}$, namely the subfield  of  $\mathbb C$ consisting in all 
elements $\xi/\xi'$ with $\xi,\xi' \in {\mathbb A}$, $\xi' \neq 0$.

\begin{theo}\label{theo:10} 
Let $\K$ be an algebraic extension of $\Q$, and $\xi \in \C\etoile$. 
Then the following statements are equivalent:
\begin{enumerate}
\item[$(i)$] We have $\xi \in \fff{\GACK}$.
\item[$(ii)$] There exist two sequences $(a_n)_{n \geq 0}$ and $(b_n)_{n \geq 0}$ 
of elements of $\K$ such that $\sum_{n=0}^\infty a_n z^n$ and 
$\sum_{n=0}^\infty b_n z^n$ are $G$-functions, $b_n \neq 0$ for 
any $n$ large enough and $\displaystyle \lim_{n \to +\infty} a_n/b_n = \xi$.
\item[$(iii)$] For any $R \geq 1$ there exist two $G$-functions 
$A(z) =  \sum_{n=0}^\infty a_n z^n$ and  $B(z) =  \sum_{n=0}^\infty b_n z^n$, 
with coefficients $a_n , b_n \in \K$ and \cvr $= 1$, such that $A(z) - \xi B(z)$ has \cvr  $> R$. 
\end{enumerate}
\end{theo}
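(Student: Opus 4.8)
The plan is to prove the cycle of implications $(iii)\Rightarrow(ii)\Rightarrow(i)\Rightarrow(iii)$, using Theorem~\ref{theo:20} (and the ring structure of $\GACK$ recorded in Lemma~\ref{lem:algebra}) as the main engine.

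The implication $(iii)\Rightarrow(ii)$ should be essentially immediate: fix any $R>1$ and take the $G$-functions $A,B$ provided by $(iii)$. Since $A(z)-\xi B(z)$ has radius of convergence $>R>1$, its coefficients $a_n-\xi b_n$ tend to $0$ geometrically, while $b_n$ does not tend to $0$ (as $B$ has radius of convergence exactly $1$, so $\limsup |b_n|^{1/n}=1$); a little care is needed to ensure $b_n\neq 0$ for $n$ large — one can arrange this, or pass to a subsequence, or add a harmless correction term like $\sum z^n/(R')^n$ with $1<R'<R$ to $B$ which does not affect the radii or the limit. Then $a_n/b_n = \xi + (a_n-\xi b_n)/b_n \to \xi$ along the relevant indices, giving $(ii)$ after the standard trick of replacing $(a_n)$, $(b_n)$ by suitable shifts so that $b_n\neq 0$ for \emph{all} large $n$.

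The implication $(ii)\Rightarrow(i)$ is where the real work lies. Given $G$-functions $A=\sum a_nz^n$, $B=\sum b_nz^n$ with $a_n/b_n\to\xi$, one wants to exhibit $\xi$ as a ratio of two elements of $\GACK$. The natural idea is to look at the Hadamard-type or diagonal construction: the sequence $(a_n/b_n)$ itself is essentially a ratio of coefficient sequences, and one hopes that $\xi=\lim a_n/b_n$ can be recognized via an Abel-summation / generating-function argument. A cleaner route: consider the power series $\sum_n (a_n b_n^{*}) z^n$ type combinations, or more promisingly use that $\xi = \lim a_n/b_n$ forces, for a well-chosen auxiliary $G$-function, a value in $\fff{\GACK}$. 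Concretely, I expect to build from $A$ and $B$ two new $G$-functions $P,Q$ with coefficients in $\K$ whose values $P(1),Q(1)$ at $1$ exist (after dividing by $z^{\rm something}$ and using that $G$-functions form a $\Qb[z]$-algebra stable under Hadamard product) and satisfy $P(1)/Q(1)=\xi$; then $(i)$ follows once we know $P(1),Q(1)\in\GACK$, which is exactly the content of Theorem~\ref{theo:20} (every value of a $G$-function with coefficients in $\K$ at a point of $\K$ lies in $\GACK$, at least up to the real/complex subtlety handled by that theorem). The subtle point is controlling convergence at $z=1$ and ensuring the denominator is nonzero; this is the step I expect to be the main obstacle, and it will likely require a quantitative version of ``$a_n/b_n\to\xi$'' together with the André–Chudnovsky growth estimates on denominators of $G$-function coefficients.

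Finally, for $(i)\Rightarrow(iii)$, write $\xi=\eta/\eta'$ with $\eta,\eta'\in\GACK$. By definition of $\GACK$, for any $R\geq 1$ there are $G$-functions $f,g$ with coefficients in $\K$ and radius of convergence $>R$ such that $f(1)=\eta$, $g(1)=\eta'$. One then wants $G$-functions $A,B$ of radius of convergence exactly $1$ with $A-\xi B$ of radius $>R$. The idea is to subtract off a ``tail-cancelling'' $G$-function of radius exactly $1$: pick any $G$-function $h$ with coefficients in $\K$, radius of convergence exactly $1$, and $h(1)=0$ — for instance $h(z)=(1-z)\sum_{n\ge 0}c_nz^n$ for a suitable $G$-function $\sum c_nz^n$ of radius $1$, or simply $h(z)=\log(1-z)+\text{(polynomial correction)}$ type building blocks, rescaled — and set $B=g+h$ (radius exactly $1$, value $\eta'$ at $1$) and $A=f+\xi\cdot(\text{something})$; more carefully, set $B(z)=g(z)\cdot u(z)$ and $A(z)=f(z)\cdot u(z)$ where $u$ is a fixed $G$-function with rational coefficients, radius of convergence exactly $1$, and $u(1)\neq 0$ finite — then $A-\xi B=(f-\xi g)u$, and $f-\xi g$ vanishes at $1$ and (after dividing by $(1-z)$, which preserves being a $G$-function) has radius $>R$, so $A-\xi B$ has radius $\min(\text{rad}(f-\xi g)/(1-z),\ \text{rad}\,u)$; one must choose $u$ with radius $>R$ as well — but $u$ is required to have radius exactly $1$, contradiction. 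The fix is to not multiply but to add: take $A=(f-\xi g)/(1-z)\cdot(1-z)+\xi B$ tautologically, i.e. choose $B$ of radius exactly $1$ with $B(1)=\eta'$ arbitrary apart from that normalization, and then $A:=\xi B+(f-\xi g)=\xi B + f-\xi g$; here $f-\xi g$ has radius $>R$ (it equals $\eta-\xi\eta'=0$ at $1$, and one shows the whole function, not just the value, can be taken of large radius by the same $\GACK$ freedom applied to $\eta-\xi\eta'=0$), so $A$ and $B$ differ by a function of radius $>R$, both have radius exactly $1$ provided $B$ does, and $A(1)=\xi B(1)=\xi\eta'=\eta$. The only genuine content needed is a supply of $G$-functions over $\K$ of radius \emph{exactly} $1$ with prescribed value at $1$, which follows from Theorem~\ref{theo:20} combined with adding a fixed radius-one, value-zero $G$-function such as $(1-z)\mathrm{Li}_1$-type series; assembling these pieces cleanly is routine once the framework is in place.
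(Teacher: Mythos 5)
There is a genuine gap, and it sits exactly where you flagged uncertainty: the implication $(ii)\Rightarrow(i)$ is the core of the theorem, and your proposal contains no working argument for it. The vague idea of cooking up auxiliary $G$-functions $P,Q$ with $P(1)/Q(1)=\xi$ via Hadamard-type combinations does not lead anywhere, because the hypothesis $a_n/b_n\to\xi$ gives information only about the termwise ratio of coefficients, not about any value of a generating function at an algebraic point. The paper's route is entirely different: it expands $A$ and $B$ near the singularities on their circle of convergence using Theorem~\ref{theo:ack} and Theorem~\ref{theo:3} (connection constants lie in $\GAC_{\K(i)}$; Corollary~\ref{corf}), then applies the Flajolet--Odlyzko transfer theorem (Theorem VI.5 of~\cite{flajolet}) to turn the local singular expansions into asymptotics $a_n\sim c\,\rho^{-n}n^{-\tau-1}(\log n)^\sigma\chi_n$, $b_n\sim\widetilde c\,\widetilde\rho^{-n}n^{-\widetilde\tau-1}(\log n)^{\widetilde\sigma}\widetilde\chi_n$; comparing these forces $(\rho,\sigma,\tau)=(\widetilde\rho,\widetilde\sigma,\widetilde\tau)$, and Lemma~\ref{lemrac} (the Vandermonde rigidity lemma) extracts from $\lim(\chi_n-\xi\widetilde\chi_n)=0$ a relation $\xi=\cti_1/\widetilde d_i$ between connection constants, which lie in $\GAC_{\K(i)}$. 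Singularity analysis is indispensable here, and its absence is what makes your sketch of $(ii)\Rightarrow(i)$ a placeholder rather than a proof.

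Two smaller points. Your $(i)\Rightarrow(iii)$ discussion circles around the right idea but the final construction $A:=\xi B+f-\xi g$ is broken: if $\xi\notin\K$ the coefficients of $\xi B$ are not in $\K$, so $A$ is not an admissible $G$-function. The clean construction (which you graze with the ``divide by $(1-z)$'' remark) is to write $\xi=\xi_1/\xi_2$, pick $U,V$ with coefficients in $\K$, radius $>R$, $U(1)=\xi_1$, $V(1)=\xi_2$, and set $A=U/(1-z)$, $B=V/(1-z)$: then $a_n=\sum_{k\le n}u_k\to\xi_1$, $b_n=\sum_{k\le n}v_k\to\xi_2\neq0$, both series have radius exactly $1$, $a_n,b_n\neq0$ for large $n$, and $a_n-\xi b_n=(a_n-\xi_1)-\xi(b_n-\xi_2)=O(R^{-n})$. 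For $(iii)\Rightarrow(ii)$ you correctly sense a nonvanishing issue, but the proposed fix (adding $\sum z^n/(R')^n$ to $B$) fails: the added coefficients need not lie in $\K$, and subtracting $\xi$ times it from $A-\xi B$ would reduce its radius to $R'<R$. The paper sidesteps this by proving the strengthened variants $(ii')$ and $(iii')$ (which do not require $b_n\neq0$ for all large $n$ in $(ii')$, and do enforce it in $(iii')$) and establishing the cycle $(i)\Rightarrow(iii')\Rightarrow(ii')\Rightarrow(i)$, from which the stated equivalences follow.
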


\begin{Remarks} 
a)  When $\xi \in \GACK$, we can take $b_n = 1$ in $(ii)$. However, 
it is not clear to us if this is also the case for   other 
elements in $\xi \in \fff{\GACK}$, in particular because 
it is doubtful that $\GACK$ itself  is  a field.

b) Ap\'ery has proved \cite{Apery} that $\zeta(3) \not\in \Q$ by constructing sequences 
$(a_n)_{n\ge 0}$ and $(b_n)_{n\ge 0}$ essentially 
as in $(iii)$ with $\K = \Q$, such that $b_n \in\Z$ and 
$\lcm(1,2,\ldots,n)^3 a_n \in \Z$. Since 
$\zeta(3) = \Li_3(1)$ (where the polylogarithms defined by $\Li_s(z) = \sum_{n =1}^{\infty} \frac{1}{n^s}z^n$, 
$s \geq 1$,  
are $G$-functions), we have $\zeta(3) \in  \GACQ$ by the remark following Theorem~\ref{theo:20}. 
Theorem~\ref{theo:10} provides a general setting for such irrationality proofs and 
one may wonder if, given an irrational number 
$\xi \in  \fff{\GACQ}$, there exists a proof {\em \`a la Ap\'ery} 
that $\xi$ is irrational. In particular, a positive answer to this question 
would imply that no irrational number $\xi \in  \fff{\GACQ}$  can be a 
Liouville number. More details are given in \S \ref{ssec:dio}.

c) $G$-functions also arise in other proofs of irrationality 
or linear independence, in the same way as in Ap\'ery's, for instance concerning 
the irrationality \cite{BR, RivoalCRAS} of $\zeta(s)$ for infinitely many odd $s \geq 3$.

d) A celebrated conjecture of Bombieri and Dwork predicts a strong relationship 
between differential equations satisfied by $G$-functions and 
Picard-Fuchs equations satisfied by periods of families of algebraic varieties defined over~$\Qb$.
See the  precise formulation  given by 
Andr\'e in~\cite[p. 7]{andre}, who proved half of the conjecture 
in~\cite [pp. 110-111]{andre}. See also \S2 of~\cite{KZ} for related considerations. 
\end{Remarks}

The paper is organized as follows. In \S\ref{sec:technical}, we collect a number of 
technical lemmas. In \S\ref{sec:alglog}, we prove that algebraic numbers and logarithms of 
algebraic numbers are in $\GACQ + i \GACQ$. In \S\ref{sec:anacont}, we review some 
classical results concerning 
the properties of differential equations satisfied by $G$-functions 
(namely Theorem~\ref{theo:ack}, due to efforts of Andr\'e, Chudnovski and Katz). 
We also prove in this section 
an important intermediate result: the 
{\em connection constants} of these differential equations are also values of $G$-functions
(Theorem~\ref{theo:3}). This result, along with the analytic continuation 
properties of $G$-functions deduced from~Theorem~\ref{theo:ack}, 
is used in the proof of 
Theorem~\ref{theo:20} in \S\ref{sec:prooftheo20}. In \S\ref{sec:prooftheo10}, 
we present the proof of  Theorem~\ref{theo:10}: the main tool is the method of 
{\em Singularity Analysis} as described in details in the 
book~\cite{flajolet}. Finally, in \S\ref{sec:perspectives}, we present a 
few problems suggested by our results: what can be said about the case of $E$-functions 
and about Diophantine perspectives.

\section{Technical lemmas}\label{sec:technical}

\subsection{General properties of the ring $\GACK$} \label{ssec:gal}

The set of $G$-functions satisfies a number of structural properties. It is 
a ring and even a $\Qb[z]$-algebra; it is stable by differentiation and 
the Hadamard product of two $G$-functions  is again a $G$-function. These properties 
will be used throughout the text, as well as 
the fact that algebraic 
functions over $\Qbar(z)$ which are holomorphic at $z=0$ 
are $G$-functions: 
this is a consequence  of  Eisenstein's theorem~(\footnote{which states that for any  
power series $\sum_{n=0}^{\infty} a_n z^n$ algebraic over $\Qb(z)$, there exists a positive 
integer $D$ such that $D^n a_n$ is an  algebraic integer for any $n$})
and the fact that an algebraic function over $\Qbar (z)$ satisfies a linear 
differential equation with coefficients in $\Qb[z]$.

The following property is useful too:

\begin{lem} \label{lem:ReIm}
Consider a $G$-function $\sum_{n=0}^{\infty} a_n z^n$. Then  the  
series $\sum_{n=0}^{\infty} \overline{a_n} z^n$, 
$\sum_{n=0}^{\infty} \textup{Re}(a_n) z^n$ and $\sum_{n=0}^{\infty} \textup{Im}(a_n) z^n$
are also $G$-functions.
\end{lem}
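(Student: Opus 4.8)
The plan is to prove the statement for $\overline{a_n}$ first, and then deduce the statements for $\Re(a_n)$ and $\Im(a_n)$ from it by taking the appropriate linear combinations, since $\Re(a_n) = \tfrac12(a_n + \overline{a_n})$ and $\Im(a_n) = \tfrac{1}{2i}(a_n - \overline{a_n})$, and the set of $G$-functions is a $\Qb$-vector space (indeed a ring and a $\Qb[z]$-algebra, as recalled just above the lemma). So the whole content is the statement for complex conjugates.

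For $\overline{f}(z) := \sum_{n=0}^\infty \overline{a_n} z^n$, I would verify the three conditions of Definition~\ref{def:gfunc} directly. Conditions $(i)$ and $(ii)$ are essentially immediate: the field $\Qb$ is stable under complex conjugation (which, under our fixed embedding $\Qb \hookrightarrow \C$, is an automorphism of $\Qb$ coming from an automorphism of $\Qb/\Q$); the set of conjugates of $\overline{a_n}$ over $\Q$ is exactly the set of complex conjugates of the conjugates of $a_n$, hence has the same moduli, so the bound $\leq C^n$ in $(i)$ is preserved; and $\overline{a_n}$ is an algebraic integer whenever $a_n$ is (algebraic integers form a ring stable under all Galois conjugation), so the same sequence $d_n \in \Z$ works in $(ii)$ — note $d_n$ is a rational integer, hence real, so $d_n\overline{a_m} = \overline{d_n a_m}$ is an algebraic integer.

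The only real point is condition $(iii)$: I must exhibit a linear differential equation over $\Qb(z)$ annihilating $\overline{f}$. Suppose $f$ satisfies $L f = 0$ with $L = \sum_{k=0}^m p_k(z)\,(\mathrm{d}/\mathrm{d}z)^k$, $p_k \in \Qb[z]$. Let $\overline{L} = \sum_{k=0}^m \overline{p_k}(z)\,(\mathrm{d}/\mathrm{d}z)^k$ be the operator obtained by conjugating every coefficient (equivalently every coefficient of every $p_k$); its coefficients again lie in $\Qb[z]$. Then, comparing Taylor coefficients at $0$, the relation $(\overline{L}\,\overline{f})$ has Taylor coefficients which are precisely the complex conjugates of the Taylor coefficients of $Lf$, because each such coefficient is a fixed $\Z$-linear (indeed polynomial-in-$n$ with coefficients built from the $\overline{p_k}$) combination of the $\overline{a_n}$'s, and conjugation is a ring automorphism of $\C$. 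Since $Lf = 0$, all those coefficients vanish, so $\overline{L}\,\overline{f} = 0$. Thus $\overline{f}$ satisfies a homogeneous linear differential equation over $\Qb(z)$, establishing $(iii)$. I expect no genuine obstacle here — the argument is routine — the one thing to be slightly careful about is keeping the bookkeeping straight between ``conjugate of an algebraic number'' in the Galois sense (used implicitly in $(i)$) and complex conjugation, and observing that complex conjugation restricted to $\Qb$ is one particular Galois automorphism, so it permutes the full set of conjugates and does not enlarge it.
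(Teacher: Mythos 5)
Your proof is correct and follows essentially the same route as the paper's: conjugate the differential operator $L$ coefficient-wise to obtain $\overline{L}$ annihilating $\sum \overline{a_n}z^n$, note that the growth and denominator bounds of Definition~\ref{def:gfunc} are preserved under conjugation, and then deduce the $\Re$ and $\Im$ cases from $2\Re(a_n)=a_n+\overline{a_n}$ and $2i\Im(a_n)=a_n-\overline{a_n}$ using the ring structure of $G$-functions. The paper's version is terser but contains the same ideas; your added care about the distinction between Galois conjugates and complex conjugation is a reasonable (if standard) elaboration.
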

\begin{proof} 
The series $\sum_{n=0}^{\infty} a_nz^n$ satisfies a linear 
differential equation $Ly=0$ with coefficients in $\Qb[z]$, hence 
$\sum_{n=0}^{\infty} \overline{a_n}z^n$ satisfies the linear 
differential equation $\overline{L}y=0$ where $\overline{L}$ is obtained 
from $L$ by replacing each coefficient 
$\sum_{k=0}^d p_k z^k$ with $\sum_{k=0}^d \overline{p_k} z^k$. 
Furthermore, the moduli of the conjugates of $\overline{a_n}$ and their common denominators 
obviously grow at most geometrically. Hence, 
$\sum_{n=0}^{\infty} \overline{a_n} z^n$ is a $G$-function.

For $\sum_{n=0}^{\infty} \textup{Re}(a_n) z^n$ and 
$\sum_{n=0}^{\infty} \textup{Im}(a_n) z^n$, we write 
$2\textup{Re}(a_n)=a_n+\overline{a_n}$, $2i\textup{Im}(a_n)=a_n-\overline{a_n}$ and use the fact 
that the sum of two $G$-functions is also a 
$G$-function.
\end{proof}

The following lemma includes the easiest properties of $\GACK$; 
especially $(i)$ will be used very often without explicit reference.

\begin{lem} \label{lem:algebra} Let $\K$ be  an algebraic extension  of $\mathbb Q$.

\item{$(i)$} $\GAC_{\mathbb{K}}$ is a ring and it contains $\K$.

\item{$(ii)$} If $\K$ is invariant under complex conjugation then:
\begin{itemize}
\item $\GACK$ is  invariant under complex conjugation.
\item   $\GAC_{\mathbb{K}\cap \mathbb R} = \GAC_{\mathbb{K}}\cap\mathbb{R}$.
\item $\R \cap \fff{\GACK}  = \fff{\GACKinterR} = \fff{\GACK \cap \R}$.
\end{itemize}

\item{$(iii)$}  $\GACQdei =   \GACQ  [i] = \GACQ + i \GACQ$, 
and more generally if $\K \subset \R$ then 
$\GACKdei =  \GACK   [i] = \GACK + i \GACK$.
\end{lem}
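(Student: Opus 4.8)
The plan is to treat the three items in order, since (ii) and (iii) build on the ring structure established in (i). For (i), I would first check that $\K \subset \GACK$: given $\alpha \in \K$, the constant power series $f(z) = \alpha$ is trivially a $G$-function with coefficients in $\K$ and infinite \cvr, and $f(1) = \alpha$. For the ring structure, take $\xi_1, \xi_2 \in \GACK$ and $R \geq 1$; choose $G$-functions $f_1, f_2$ with coefficients in $\K$ and \cvr $> R$ such that $f_j(1) = \xi_j$. Then $f_1 + f_2$ and $f_1 f_2$ (Cauchy product) are again $G$-functions with coefficients in $\K$, their \cvrs are still $> R$ (the radius of a sum or product of two power series is at least the minimum of the two radii), and they evaluate at $1$ to $\xi_1 + \xi_2$ and $\xi_1 \xi_2$ respectively. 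Since $R$ was arbitrary, $\xi_1 \pm \xi_2$ and $\xi_1\xi_2$ lie in $\GACK$; closure under negation comes from multiplying by the constant $G$-function $-1$. This uses only that $G$-functions form a ring (stated at the start of \S\ref{ssec:gal}) and the elementary behaviour of radii of convergence under $+$ and $\times$.

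For (ii), suppose $\K = \overline{\K}$. \emph{Invariance under conjugation:} if $\xi = f(1)$ with $f(z) = \sum a_n z^n$ a $G$-function with coefficients in $\K$ and \cvr $> R$, then by Lemma~\ref{lem:ReIm} the series $\overline{f}(z) = \sum \overline{a_n} z^n$ is again a $G$-function; its coefficients lie in $\overline{\K} = \K$, it has the same \cvr $> R$, and $\overline{f}(1) = \overline{\xi}$. Hence $\overline{\xi} \in \GACK$. \emph{The identity $\GAC_{\K \cap \R} = \GACK \cap \R$:} the inclusion ``$\subseteq$'' is immediate from $\GAC_{\K\cap\R} \subset \GACK$ and $\GAC_{\K\cap\R} \subset \R$. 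For ``$\supseteq$'', take $\xi \in \GACK \cap \R$ and $R \geq 1$, with $\xi = f(1)$, $f$ a $G$-function over $\K$ with \cvr $> R$. Then $g := \frac{1}{2}(f + \overline{f})$ is a $G$-function (sum of two $G$-functions, by Lemma~\ref{lem:ReIm} and the ring property), its coefficients are $\mathrm{Re}(a_n)$, which lie in $\K \cap \R$ because $\K$ is conjugation-stable so $a_n + \overline{a_n} \in \K \cap \R$, its \cvr is $> R$, and $g(1) = \mathrm{Re}(\xi) = \xi$. Hence $\xi \in \GAC_{\K\cap\R}$. \emph{The field-of-fractions identity:} ``$\fff{\GAC_{\K\cap\R}} \subseteq \R \cap \fff{\GACK}$'' is clear; for the reverse, an element of $\R \cap \fff{\GACK}$ is a real number of the form $\xi/\xi'$ with $\xi,\xi' \in \GACK$; multiplying numerator and denominator by $\overline{\xi'}$ (which lies in $\GACK$ by the first bullet) we may assume the denominator $\xi'\overline{\xi'} = |\xi'|^2$ is real, hence in $\GACK \cap \R = \GAC_{\K\cap\R}$; then the numerator $\xi\overline{\xi'} = (\xi/\xi')\,|\xi'|^2$ is also real and in $\GACK \cap \R = \GAC_{\K\cap\R}$, so the quotient lies in $\fff{\GAC_{\K\cap\R}}$. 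The identity $\fff{\GAC_{\K\cap\R}} = \fff{\GACK \cap \R}$ is just a restatement since $\GAC_{\K\cap\R} = \GACK \cap \R$.

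For (iii), I would prove the general statement ``$\K \subset \R \implies \GAC_{\K(i)} = \GACK + i\GACK$'', the case $\K = \Q$ being a special case. The inclusion $\GACK + i\GACK \subseteq \GAC_{\K(i)}$ follows from (i): $\GAC_{\K(i)}$ is a ring containing $\K(i)$, hence containing $i$; and $\GACK \subseteq \GAC_{\K(i)}$ since a $G$-function over $\K$ is a fortiori one over $\K(i)$; so $\GACK + i\GACK \subseteq \GAC_{\K(i)}$, and this also gives $\GACK + i\GACK = \GACK[i]$. For the reverse inclusion, let $\xi \in \GAC_{\K(i)}$ and $R \geq 1$, with $\xi = f(1)$ where $f(z) = \sum a_n z^n$ is a $G$-function with coefficients in $\K(i)$ and \cvr $> R$. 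Since $\K \subset \R$ we have $\mathrm{Re}(a_n), \mathrm{Im}(a_n) \in \K$, so by Lemma~\ref{lem:ReIm} the series $g(z) = \sum \mathrm{Re}(a_n) z^n$ and $h(z) = \sum \mathrm{Im}(a_n) z^n$ are $G$-functions with coefficients in $\K$; their \cvrs are $\geq$ that of $f$, hence $> R$; and $f = g + ih$, so $\xi = g(1) + i\,h(1) \in \GACK + i\GACK$. Letting $R$ vary gives $\xi \in \GACK + i\GACK$. I do not anticipate a serious obstacle here: the only points requiring any care are (a) the behaviour of radii of convergence under sums and products — one must note that Lemma~\ref{lem:ReIm} produces $G$-functions but says nothing directly about radii, so one argues separately that $\mathrm{Re}(a_n), \mathrm{Im}(a_n)$ have modulus bounded by that of $a_n$, forcing the \cvr not to decrease — and (b) keeping track of which field the coefficients land in when $\K$ is conjugation-stable, which is where the hypotheses in (ii) and (iii) are used.
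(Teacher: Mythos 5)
Your proposal is correct and, in broad strokes, follows the same line as the paper: part~$(i)$ from the ring properties of $G$-functions and the behaviour of radii, part~$(ii)$ from Lemma~\ref{lem:ReIm} and the conjugation-stability of $\K$, part~$(iii)$ by splitting into real and imaginary parts. Two localized differences are worth noting. For the third bullet of~$(ii)$, you multiply numerator and denominator of $\xi/\xi'$ by $\overline{\xi'}$ so that the denominator $|\xi'|^2$ is automatically real and nonzero; the paper instead writes $\xi/\xi' = (\xi+\overline{\xi})/(\xi'+\overline{\xi'})$, which requires first multiplying by a non-real element of $\K$ to guarantee $\xi'\notin i\R$. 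Your version is slightly cleaner because it dispenses with that preliminary normalization. For~$(iii)$ the paper works abstractly: it applies~$(ii)$ with $\K$ replaced by $\K(i)$ to deduce $\Re(\xi),\Im(\xi)\in\GACKdei\cap\R=\GACK$, whereas you decompose the $G$-function $f=g+ih$ directly. Your route is fine, but the step ``letting $R$ vary gives $\xi \in \GACK + i\GACK$'' glides over the only point that actually needs a remark: for each $R$ you get $g(1),h(1)\in\GACRK$, and to conclude you need these two numbers not to depend on $R$. That is indeed automatic --- since $g,h$ have coefficients in $\K\subset\R$ and \cvr $>1$, the values $g(1),h(1)$ are real, so they are forced to equal $\Re(\xi)$ and $\Im(\xi)$ respectively --- but it deserves a sentence, since this is exactly what turns a family of $R$-dependent decompositions into a single decomposition with $\Re(\xi),\Im(\xi)\in\GACK$.
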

\begin{Remark}
Andr\'e~\cite[p. 123]{andre} proved that algebraic functions holomorphic at $z=0$ and 
non-vanishing at $z=0$ form the group of units of the ring of $G$-functions.
It is an interesting problem to determine the group of units of 
$\GACQdei$. So far, it is known that it contains $\Qb$ (see Lemma~\ref{prop:2}) but also all 
integral powers of $\pi$. This is a consequence of the identities  
$\pi=4\arctan(1)$ (see also Lemma~\ref{lem:log}) and 
$1/\pi=\sum_{n=0}^{\infty}\binom{2n}{n}^3(42n+5)/2^{12n+4}$ (Ramanujan), which 
show that $\pi$ and $1/\pi$ are 
in $\GCQ = \GACQ$, by Theorem~\ref{theo:20}.
\end{Remark}

\begin{proof}
$(i)$ The properties of $G$-functions ensure that the sum and product of two 
$G$-functions with coefficients in $\mathbb{K}$ and \cvrs $>R \geq 1$ 
are $G$-functions with coefficients in $\mathbb{K}$ and \cvrs $>R$.  Moreover 
algebraic constants are $G$-functions with infinite radius of convergence.

\medskip

$(ii)$ Using Lemma~\ref{lem:ReIm} and the fact that $\K$ 
is  invariant under complex conjugation, 
if $\sum_{n=0}^{\infty} a_n  z^n$ is a $G$-function  with 
coefficients in $\mathbb{K}$ and \cvrs $>R \geq 1$ 
then so is $\sum_{n=0}^{\infty} \overline{a_n}  z^n$: this proves that $\GACK$ 
is invariant under complex conjugation.

The inclusion  $\GAC_{\mathbb{K}\cap \mathbb R} \subset 
\GAC_{\mathbb{K}}\cap\mathbb{R}$ is obvious. Conversely, if 
$\xi\in \mathbb{R}\cap \GAC_{\mathbb{K}}$ then for any $R \geq 1$  we 
have $\xi = \sum_{n=0} ^\infty a_n$ where $\sum_{n=0}^{\infty} a_n z^n$ is a $G$-function  
with coefficients in $\mathbb{K}$ and \cvr $>R $. Then 
 $\sum_{n=0}^{\infty}  \Re(a_n) z^n$ is also a $G$-function (by Lemma~\ref{lem:ReIm}); it has 
coefficients in $\mathbb{K}\cap \R$ (because $ \Re(a_n)  = \frac12 (a_n + \overline{a_n})$) 
and \cvr $>R $. Therefore 
 $\xi = \sum_{n=0} ^\infty \Re(a_n) \in \GACKinterR$.

Finally, the inclusion $\fff{\GACK \cap \R} \subset \R \cap \fff{\GACK}$ is trivial. 
The converse is trivial too if $\K \subset \R$; otherwise let $\xi, \xi' \in \GACK$ 
be such that $\xi' \neq 0$ and $\xi/\xi' \in \R$. Multiplying if necessary by a non-real 
element of $\K$, we may assume $\xi, \xi' \not\in i\R$. Then we have 
$ \xi  / \xi'  = (\xi+\overline{\xi}) / (\xi' + \overline{\xi'}) \in \fff{\GACK \cap \R} $.

\medskip

$(iii)$ Assume $\K \subset \R$. Since $\GACK$ is a ring and $i^2=-1 \in \GACK$, 
we have $ \GACK   [i] = \GACK + i \GACK$. This is obviously a subset of $\GACKdei $. 
Conversely, $\K(i)$ is invariant under complex conjugation (because $\K \subset \R$) 
so that for any $\xi \in \GACKdei$ we have $\Re(\xi) = \frac12 (\xi + \overline{\xi}) 
\in \GACKdei \cap \R = \GACK$ by $(ii)$. Since $i \in \K(i) \subset \GACKdei$ 
we have $\Im (\xi) = -i (\xi - \Re(\xi)) \in \GACKdei  \cap \R = \GACK$, using $(ii)$ 
again. Finally $\xi  = \Re(\xi) + i \Im(\xi) \in \GACK + i \GACK$.
\end{proof}

\bigskip

The following lemma is a consequence of Lemma~\ref{prop:2} proved 
in \S\ref{sec:alglog} below; of course the proof of Lemma~\ref{prop:2} 
does not use Lemma~\ref{lem3nv}, hence there is no circularity.

\begin{lem} \label{lem3nv}
Let $\K$ be an algebraic extension of $\Q$.

\item{$(i)$} We have $\Qbar \cap \R \subset \GACQ \subset \GACK$, 
and $\GACK $ is a $(\Qbar \cap \R)$-algebra.

\item{$(ii)$} If $ \K \not\subset \R$ then $\Qbar \subset \GACQdei \subset \GACK$, 
and $\GACK $ is a $ \Qbar $-algebra.
\end{lem}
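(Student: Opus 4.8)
The plan is to deduce Lemma~\ref{lem3nv} from Lemma~\ref{prop:2} by combining the inclusions it provides with the ring structure of $\GACK$ already established in Lemma~\ref{lem:algebra}. Recall that Lemma~\ref{prop:2} (stated in \S\ref{sec:alglog}) asserts that $\Qbar \subset \GACQdei + i\GACQdei$, or more precisely that algebraic numbers lie in $\GACQ + i \GACQ = \GACQdei$; in particular every \emph{real} algebraic number lies in $\GACQ$. So the first step is simply to invoke this: $\Qbar \cap \R \subset \GACQ$. The inclusion $\GACQ \subset \GACK$ is immediate from Definition~\ref{defilem:1bis}, since a $G$-function with coefficients in $\Q$ is a fortiori one with coefficients in the larger field $\K$. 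This gives the chain $\Qbar \cap \R \subset \GACQ \subset \GACK$ in part~$(i)$.

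For the algebra statements, I would argue as follows. By Lemma~\ref{lem:algebra}$(i)$, $\GACK$ is a ring. By the first step it contains $\Qbar \cap \R$ as a subring, and since $\Qbar \cap \R$ is a field, this exhibits $\GACK$ as a $(\Qbar \cap \R)$-algebra; this proves $(i)$. For $(ii)$, assume $\K \not\subset \R$, so $\K$ contains some non-real algebraic number, whence $i \in \fff{\K}$; more to the point, since $\K$ is not contained in $\R$, the field $\K$ already contains a non-real element and one checks that $\GACK$ contains $i$ (alternatively, one can write $\Qbar \subset \GACQ + i\GACQ = \GACQdei$ and note that $\GACQdei \subset \GACK$ when $\K \not\subset \R$, using Lemma~\ref{lem:algebra}$(iii)$ together with $\GACQ \subset \GACK$ and $i \in \GACK$). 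Either way we obtain $\Qbar \subset \GACQdei \subset \GACK$. Since $\GACK$ is a ring containing the field $\Qbar$, it is a $\Qbar$-algebra, completing $(ii)$.

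The only genuine content here is the input from Lemma~\ref{prop:2}, and the small verification that $i \in \GACK$ whenever $\K \not\subset \R$ — this needs the observation that if $\alpha \in \K$ is non-real then $\Re(\alpha), \Im(\alpha)$ are real algebraic numbers, hence lie in $\GACQ \subset \GACK$ by part~$(i)$ (whose proof does not use part~$(ii)$), and therefore $i = (\alpha - \Re(\alpha))/\Im(\alpha) \in \fff{\GACK}$; but in fact $i$ itself lies in $\GACK$ since $i \in \Qbar$ and we may instead simply apply Lemma~\ref{prop:2} directly to the algebraic number $i$. I expect no real obstacle: the lemma is essentially bookkeeping once Lemma~\ref{prop:2} is in hand, and the non-circularity remark in the excerpt confirms that the logical dependencies are arranged correctly. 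The one point to be careful about is to keep the proof of $(i)$ independent of $(ii)$, so that the chain of implications Lemma~\ref{prop:2} $\Rightarrow$ Lemma~\ref{lem3nv}$(i)$ $\Rightarrow$ Lemma~\ref{lem3nv}$(ii)$ is genuinely acyclic.
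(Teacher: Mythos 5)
Your overall route is the same as the paper's — feed the output of Lemma~\ref{prop:2} into the ring structure from Lemma~\ref{lem:algebra}, proving $(i)$ first and then using it for $(ii)$ — and most of the argument is fine. But there is one genuine slip, precisely at the step you yourself flagged as ``the one point to be careful about,'' namely proving $i \in \GACK$.

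Your preferred justification (``$i$ itself lies in $\GACK$ since $i \in \Qbar$ and we may instead simply apply Lemma~\ref{prop:2} directly to the algebraic number $i$'') does not work. Lemma~\ref{prop:2} produces a $G$-function with coefficients in $\Q(i)$ whose value at $1$ is $i$, so it yields $i \in \GACQdei$ — not $i \in \GACK$. The inclusion $\GACQdei \subset \GACK$ requires $\Q(i) \subset \K$, which need not hold: take $\K = \Q(\sqrt{-2})$, which is not contained in $\R$ but does not contain $i$. Your first justification is the right one and matches the paper, but you stop short: you only conclude $i \in \fff{\GACK}$. To land $i$ in $\GACK$ itself, observe that $\Im(\alpha) \neq 0$ is a real algebraic number, hence so is $1/\Im(\alpha)$, and $1/\Im(\alpha) \in \Qbar \cap \R \subset \GACK$ by part $(i)$. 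Then
$$
i \;=\; \frac{1}{\Im(\alpha)}\,\bigl(\alpha - \Re(\alpha)\bigr)
$$
is a product of elements of the ring $\GACK$ (namely $1/\Im(\alpha)$, $\alpha \in \K \subset \GACK$, and $\Re(\alpha) \in \Qbar\cap\R \subset \GACK$), so $i \in \GACK$ with no passage to the field of fractions. This is exactly the paper's argument, which writes $i = \tfrac{1}{\beta}\bigl((\alpha+i\beta) - \alpha\bigr)$ with $\tfrac1\beta, \alpha \in \Qbar\cap\R$. Once $i \in \GACK$ is established, your conclusion $\GACQdei = \GACQ + i\GACQ \subset \GACK$ and hence $\Qbar \subset \GACK$ goes through.

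One smaller point: in part $(i)$ you pass from ``$\Qbar \subset \GACQ + i\GACQ$'' to ``every real algebraic number lies in $\GACQ$'' without comment. This is correct, but it uses the fact that $\GACQ \subset \R$ (so that writing a real $x$ as $a + ib$ with $a,b \in \GACQ$ forces $b=0$); the paper instead invokes Lemma~\ref{lem:algebra}$(ii)$, which gives $\GACQdei \cap \R = \GACQ$. Either is fine, but the step deserves a word.
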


\begin{proof}
$(i)$ By Lemma~\ref{prop:2}, we have $\Qbar \cap \R \subset \GACQdei \cap \R$; this 
is equal to $\GACQ$ by Lemma~\ref{lem:algebra}.  The inclusion 
$\GACQ \subset \GACK$ is trivial since $\Q \subset \K$.

\medskip
$(ii)$ Since $\K \not\subset \R$, there exist $\alpha, \beta\in \R$ such 
that $\alpha+i\beta \in \K$ and $\beta\neq 0$; since $\alpha-i\beta $ is 
also algebraic, we have $\alpha, \beta \in \Qbar$. Therefore we can write 
$i = \frac1{\beta} ((\alpha+i\beta)-\alpha)$ with $\frac1{\beta}, \alpha 
\in \Qbar \cap \R \subset \GACK$ (by $(i)$). Since $\GACK$ is a ring 
which contains $\alpha+i\beta$, this yields $i \in \GACK$, so that 
(using Lemma~\ref{lem:algebra} and the trivial inclusion $\GACQ \subset \GACK$) 
$\GACQdei = \GACQ + i \GACQ \subset \GACK$. Using the inclusion 
$\Qbar \subset \GACQdei$ proved in Lemma~\ref{prop:2}, this concludes the proof of $(ii)$.
\end{proof}

To conclude this section, we state and prove the following lemma, 
which is very useful for constructing elements of $\GACRK$. Recall 
that $\GACRK$ is   the set of all $\xi = f(1)$ where $f$ is 
a $G$-function with  coefficients in $\mathbb{K}$ and \cvr $>R$.

\begin{lem} \label{lemR}
Let $\K$ be an algebraic extension of $\Q$. Let $\zeta\in \K$, and 
$g(z)$ be a $G$-function in the variable $\zeta-z$, with coefficients 
in $\K$ and \cvr $\geq r >0$. Then $g(z_0) \in \GACRK$ for any 
$R \geq 1$ and any $z_0 \in \K$ such that $|z_0 - \zeta| < r/R$.
\end{lem}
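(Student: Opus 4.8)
The plan is to exploit the fact that a $G$-function in the variable $\zeta-z$ is, by definition, a power series $g(z)=\sum_{n\ge 0} c_n(\zeta-z)^n$ with coefficients $c_n\in\K$ satisfying the arithmetic growth conditions $(i)$ and $(ii)$ of Definition~\ref{def:gfunc} and a differential equation over $\Qb(z)$ (in the variable $\zeta-z$, hence also in $z$ after the affine change of variable). First I would reduce to the point $1$: set $h(w) := g\bigl(\zeta - (\zeta - z_0)w\bigr)$, so that $h$ is a power series in $w$, namely $h(w)=\sum_{n\ge 0} c_n (\zeta-z_0)^n w^n$, and $h(1)=g(z_0)$. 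The coefficients of $h$ are $c_n(\zeta-z_0)^n\in\K$, since $\zeta,z_0\in\K$.

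Next I would check that $h$ is genuinely a $G$-function. The arithmetic conditions are immediate: if $\vert\zeta-z_0\vert\le \rho$ for some constant, the conjugates of $c_n(\zeta-z_0)^n$ grow at most geometrically (using condition $(i)$ for the $c_n$ together with the fact that $\zeta-z_0$ is a fixed algebraic number), and a common denominator of size $C^n$ for $c_0,\dots,c_n$ combined with a fixed denominator $D$ for $\zeta-z_0$ gives $(DC)^n$-type denominators, verifying $(ii)$. Condition $(iii)$ holds because affine substitutions $z\mapsto \zeta-(\zeta-z_0)w$ send a linear differential equation with coefficients in $\Qb(z)$ to one with coefficients in $\Qb(w)$. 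Hence $h$ is a $G$-function with coefficients in $\K$.

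Finally I would control the radius of convergence of $h$. The series $\sum c_n t^n$ has radius of convergence $\ge r$ by hypothesis, so $\sum c_n(\zeta-z_0)^n w^n$ has radius of convergence $\ge r/\vert\zeta-z_0\vert$. By the hypothesis $\vert z_0-\zeta\vert < r/R$ we get $r/\vert\zeta-z_0\vert > R$, so $h$ has radius of convergence $>R$. Therefore $g(z_0)=h(1)\in\GACRK$ by the very definition of $\GACRK$ recalled just before the lemma.

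The only point requiring a little care — and the main (mild) obstacle — is the verification of condition $(ii)$ for $h$: one must be sure that passing to the coefficients $c_n(\zeta-z_0)^n$ does not destroy the geometric bound on common denominators, i.e. that a fixed integer $D$ with $D\cdot(\zeta-z_0)$ an algebraic integer yields $D^n c_n(\zeta-z_0)^n$ with denominators still bounded by a geometric factor when combined with the $d_n$ coming from the $c_n$; this is straightforward but is the step where the specific structure of Definition~\ref{def:gfunc}$(ii)$ (the uniform denominator $d_n$ for all $c_m$, $m\le n$) is actually used. Everything else is bookkeeping about affine changes of variable and radii of convergence.
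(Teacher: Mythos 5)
Your proof is correct and follows the same route as the paper: the substitution $h(w)=g\bigl(\zeta+(z_0-\zeta)w\bigr)$ is exactly the paper's $f(z)=g\bigl(\zeta+z(z_0-\zeta)\bigr)$. The paper simply asserts in one line that $f$ is a $G$-function with coefficients in $\K$ and radius of convergence $>R$, whereas you spell out the verifications of conditions $(i)$–$(iii)$ and the radius computation; these details are all accurate.
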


\begin{proof} Letting $f(z) = g\big(\zeta + z(z_0-\zeta)\big)$, we have 
$f(1) = g(z_0)$ and $f$ is a $G$-function with coefficients in $\K$ and \cvr $>R$. 
\end{proof}

\subsection{Miscellaneous lemmas}

We gather in this section two lemmas which are neither difficult nor specific to $G$-functions, but very useful.

\begin{lem}\label{lem:regul}
Let ${\mathbb A}$ be a subring of $\C$. 
Let $S \subset \N $ and $T \subset \Q$ be finite subsets. 
For any $(s,t)\in S \times T$, let $f_{s,t}(z) = \sum_{n=0}^{\infty} a_{s,t,n} z^n \in {\mathbb A}[[z]]$ 
be a function holomorphic at $0$, with Taylor coefficients in ${\mathbb A}$. Let $\Omega$ 
denote an open subset of $\C$, with $0$ in its boundary, on which a continuous 
determination of the logarithm is chosen. Then there exist $c \in {\mathbb A}$, 
$\sigma \in \N$ and $\tau \in \Q$ such that, as $z \to 0$ with $z \in \Omega$:
\begin{equation} \label{eqlemreg}
\sum_{s\in S} \sum_{t \in T} (\log z) ^s z^t f_{s,t}(z) = c \,  (\log z) ^\sigma z^\tau (1+o(1)).
\end{equation}
\end{lem}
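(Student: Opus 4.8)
The statement asks us to show that a finite sum of terms of the form $(\log z)^s z^t f_{s,t}(z)$, with $s$ ranging over a finite set of nonnegative integers, $t$ over a finite set of rationals, and each $f_{s,t}$ holomorphic at $0$ with Taylor coefficients in $\mathbb{A}$, is asymptotic as $z\to 0$ in $\Omega$ to a single monomial $c\,(\log z)^\sigma z^\tau$. The plan is to expand each $f_{s,t}$ in its Taylor series and regroup the whole sum as a (formal, then genuine for small $|z|$) series in the ``monomials'' $(\log z)^s z^{t+n}$, then identify the dominant one.

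\textbf{Key steps.} First I would write $f_{s,t}(z) = \sum_{n\ge 0} a_{s,t,n} z^n$, so that
\[
\sum_{s\in S}\sum_{t\in T} (\log z)^s z^t f_{s,t}(z) \;=\; \sum_{s\in S}\sum_{t\in T}\sum_{n\ge 0} a_{s,t,n}\,(\log z)^s z^{t+n}.
\]
The exponents $t+n$ that occur lie in the countable set $T+\mathbb{N}$, which has no accumulation point from below: for each real value $\rho$, only finitely many pairs $(t,n)\in T\times\mathbb{N}$ satisfy $t+n=\rho$, and the set of such $\rho$ that are $\le$ any fixed bound is finite. Hence I can reindex the double sum over $(t,n)$ as a sum over the increasing sequence of exponents $\rho_0 < \rho_1 < \cdots$ in $T+\mathbb{N}$, collecting for each $\rho_k$ the finite linear combination $\sum_{s} \big(\sum_{t: \, t+n=\rho_k} a_{s,t,n}\big)(\log z)^s =: P_k(\log z)$, a polynomial of degree $\le \max S$ with coefficients in $\mathbb{A}$. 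Let $k_0$ be the least index with $P_{k_0}\not\equiv 0$ (if all $P_k$ vanish the sum is identically zero near $0$ and any choice $c=0$, e.g.\ $\sigma=0$, $\tau=0$, works). Write $\sigma = \deg P_{k_0}$ and let $c$ be its leading coefficient, $\tau = \rho_{k_0}$. Then the claimed asymptotic is
\[
\sum_{s,t}(\log z)^s z^t f_{s,t}(z) \;=\; c\,(\log z)^\sigma z^\tau\Big(1 + \frac{\text{lower order in }\log z}{c(\log z)^\sigma} + \frac{1}{c(\log z)^\sigma}\sum_{k>k_0}\frac{P_k(\log z)}{1} z^{\rho_k-\rho_{k_0}}\Big),
\]
and each error term tends to $0$ as $z\to 0$ in $\Omega$: the purely logarithmic correction $P_{k_0}(\log z)/(c(\log z)^\sigma) - 1$ is $O(1/\log z) = o(1)$ since $|\log z|\to\infty$ (here one uses that on $\Omega$, with $0$ on its boundary and a fixed determination of $\log$, $|\log z|\to\infty$ as $z\to 0$), while the tail contributes $z^{\rho_k - \rho_{k_0}}$ with $\rho_k-\rho_{k_0}>0$, times a polynomial in $\log z$ of bounded degree, which is $o(1)$.

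\textbf{Main obstacle.} The only genuine care is the interchange of summation and the convergence of the tail: one must check that $\sum_{k>k_0} P_k(\log z)\, z^{\rho_k-\rho_{k_0}}$ converges and is $o(1)$ uniformly as $z\to 0$ in $\Omega$. This follows because each $f_{s,t}$ has a positive radius of convergence $r_{s,t}>0$, so for $|z|<\min_{s,t} r_{s,t}$ the original double sum converges absolutely; absolute convergence justifies the rearrangement into the $\rho_k$-grouping, and a crude bound $|P_k(\log z)|\le C(1+|\log z|)^{\max S}$ combined with geometric decay of $|z|^{\rho_k-\rho_{k_0}}$ (using that the $\rho_k$ increase to $+\infty$, with gaps bounded below away from $0$ after finitely many terms, since $T+\mathbb N$ is contained in $\frac1D\mathbb Z$ for a common denominator $D$ of $T$) gives a convergent majorant of the form $C(1+|\log z|)^{\max S}\,|z|^{1/D}/(1-|z|^{1/D})$, which is $o(1)$. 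Everything else is bookkeeping; the coefficients $c$, $\sigma$, $\tau$ are read off directly from the first non-vanishing $P_k$, and $c\in\mathbb{A}$, $\sigma\in\mathbb{N}$, $\tau\in\mathbb Q$ by construction.
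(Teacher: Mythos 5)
Your proof is correct and follows essentially the same strategy as the paper's: regroup the Taylor expansions into a single series indexed by exponents in $T+\mathbb{N}$, identify the smallest exponent $\tau$ with a nonzero coefficient and the largest log-power $\sigma$ at that exponent, and read off $c$. You supply a bit more detail than the paper on why the remainder is $o(1)$ (using $|\log z|\to\infty$ and the common denominator of $T$ to control the tail), but the underlying argument is the same.
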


\begin{proof}
Let $T + \N = \{t+n, t\in T, n \in \N\}$. For any $s \in S$ and 
any $\theta \in T+\N$, let $c_{s,\theta} = \sum_{t \in T} a_{s,t,\theta-t}$ 
where we let $a_{s,t,\theta-t} = 0$ if $\theta-t \not\in \N$. Then the left 
handside of \eqref{eqlemreg} can be written, for $z \in \Omega$ sufficiently 
close to 0, as an absolutely converging series 
$\sum_{\theta \in T+\N} \sum_{s\in S} c_{s,\theta} (\log z) ^s z^\theta$. 
If $ c_{s,\theta} = 0$ for any $(s,\theta)$ then~\eqref{eqlemreg} holds 
with $c=0$. Otherwise we denote by $\tau$ the minimal value of $\theta$ 
for which there exists $s \in S$ with $ c_{s,\theta} \neq 0$, and by 
$\sigma$ the largest $s \in S$ such that $ c_{s,\tau} \neq 0$. 
Then~\eqref{eqlemreg} holds with $c = c_{\sigma, \tau} \in {\mathbb A}$. 
\end{proof}

The following result will be used in the proof of Theorem~\ref{theo:10}.

\begin{lem} \label{lemrac}
Let $\omega_1, \ldots, \omega_t$ be pairwise distinct complex numbers, 
with $|\omega_1| =  \cdots =  |\omega_t| = 1$. Let $\kappa_1,  \ldots , \kappa_t \in \C$ 
be such that 
$\displaystyle \lim_{n\to +\infty} \kappa_1\omega_1^n + \cdots + \kappa_t\omega_t^n = 0$. 
Then   $\kappa_1 = \cdots = \kappa_t=0$. 
\end{lem}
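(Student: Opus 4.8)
The plan is to prove Lemma~\ref{lemrac} by induction on $t$, the key idea being that one can isolate the contribution of a single frequency $\omega_j$ by Cesàro-type averaging against $\overline{\omega_j}^n$. First I would dispose of the case $t=1$: if $\kappa_1\omega_1^n \to 0$ with $|\omega_1|=1$, then $|\kappa_1| = |\kappa_1\omega_1^n| \to 0$, so $\kappa_1 = 0$.

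For the inductive step, suppose the result holds for $t-1$ frequencies and that $\kappa_1\omega_1^n + \cdots + \kappa_t\omega_t^n \to 0$. Fix an index, say $t$, and consider the averages
\begin{equation}
\frac{1}{N}\sum_{n=0}^{N-1} \big(\kappa_1\omega_1^n + \cdots + \kappa_t\omega_t^n\big)\overline{\omega_t}^{\,n}.
\end{equation}
Since the summand tends to $0$ multiplied by a bounded factor, the Cesàro mean of the whole expression tends to $0$. On the other hand, for $j \neq t$ we have $\omega_j\overline{\omega_t} \neq 1$ (the $\omega_j$ are pairwise distinct and all have modulus $1$, so $\overline{\omega_t} = \omega_t^{-1}$ and $\omega_j\omega_t^{-1} = 1$ would force $\omega_j = \omega_t$), and $|\omega_j\overline{\omega_t}| = 1$, so the geometric sum $\sum_{n=0}^{N-1}(\omega_j\overline{\omega_t})^n$ stays bounded (it equals $\frac{(\omega_j\overline{\omega_t})^N - 1}{\omega_j\overline{\omega_t} - 1}$), whence its Cesàro mean tends to $0$. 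The term $j=t$ contributes $\kappa_t |\omega_t|^{2n} = \kappa_t$ for every $n$, so its average is exactly $\kappa_t$. Passing to the limit gives $\kappa_t = 0$. By symmetry (or simply by repeating the argument with each index in turn) every $\kappa_j = 0$; alternatively, once $\kappa_t = 0$ the hypothesis reduces to $\kappa_1\omega_1^n + \cdots + \kappa_{t-1}\omega_{t-1}^n \to 0$ and the inductive hypothesis finishes the proof.

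I do not expect any serious obstacle here: the only point requiring a moment's care is the justification that a sequence tending to $0$ has Cesàro means tending to $0$ (a standard fact) and that the cross-term geometric sums are genuinely bounded, which uses precisely the hypotheses that the $\omega_j$ are distinct and unimodular. An alternative route avoiding Cesàro means altogether would be to invoke that the functions $n \mapsto \omega_j^n$ are linearly independent over $\C$ as sequences — for instance via a Vandermonde determinant argument on $t$ consecutive values $n, n+1, \dots, n+t-1$ — combined with the observation that a nonzero linear combination of unimodular geometric sequences cannot tend to $0$; but the averaging argument is cleaner and entirely self-contained, so that is the one I would write up.
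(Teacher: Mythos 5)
Your proposal is correct, and it takes a genuinely different route from the paper. The paper argues via a single Vandermonde-type determinant: it forms the $t\times t$ matrix $M_n$ with columns $C_{i,n}=(\omega_i^n,\omega_i^{n+1},\ldots,\omega_i^{n+t-1})^T$, observes that $|\det M_n|=|\det M_0|\neq 0$ is constant in $n$ (each column is $\omega_i^n C_{i,0}$ and $|\omega_i|=1$), and then, assuming some $\kappa_j\neq 0$, replaces the $j$-th column by $\frac{1}{\kappa_j}\sum_i\kappa_i C_{i,n}$, whose entries tend to $0$ by hypothesis; this forces $\det M_n\to 0$, a contradiction. Your Cesàro-averaging argument is the classical harmonic-analysis approach: multiply by $\overline{\omega_t}^{\,n}$, take averages, use boundedness of geometric sums with unimodular ratio $\neq 1$ to kill the cross terms, and extract $\kappa_t=0$. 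Both are entirely self-contained. The paper's argument is shorter and avoids induction and any limit-averaging lemma, getting all coefficients at once from a single nonvanishing determinant; your argument is arguably more transparent in showing \emph{why} each coefficient must vanish (orthogonality of characters), and you correctly note the Vandermonde route as an alternative — it is in fact the one the authors chose.
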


\begin{proof}
For any $n\geq 0$, let $\delta_n = \det M_n$ where
$$M_n = \left(\begin{array}{cccc}
\omega_1^n& \omega_2^n& \ldots & \omega_t^n\\ 
\omega_1^{n+1}& \omega_2^{n+1}& \ldots &\omega_t^{n+1}\\ 
\vdots & \vdots &&   \vdots  \\
\omega_1^{n+t-1}& \omega_2^{n+t-1}& \ldots &\omega_t^{n+t-1}
\end{array}\right).$$
Let $C_{i,n}$ denote the $i$-th column of $M_n$. Since 
$C_{i,n} = \omega_i^n C_{i,0}$ we have $|\delta_n | = |   \omega_1^n  
 \ldots  \omega_t^n \delta_0|  = |\delta_0| \neq 0 $ because $\delta_0$ 
is the Vandermonde determinant built on the   pairwise distinct  numbers 
$\omega_1, \ldots, \omega_t$. Now assume that $\kappa_j \neq 0$ for some $j$. 
Then for computing $\delta_n$ we can replace $C_{j,n} $ with 
$\frac1{\kappa_j} \sum_{i=1}^t \kappa_i C_{i,n}$; this implies 
$\displaystyle \lim_{n \to +\infty} \delta_n=0$, in contradiction with the fact 
that $|\delta_n | =   |\delta_0| \neq 0 $.  
\end{proof}

\section{Algebraic numbers and logarithms as values of $G$-functions} \label{sec:alglog}

An important step for us is to show that algebraic 
numbers are values of $G$-functions. Despite quite general results in related 
directions, this fact does not seem to have been proved in the literature 
in the full form we need. Eisenstein~\cite{still} showed that the $G$-function 
(of hypergeometric type)
$$
\sum_{n=0}^{\infty} (-1)^n\frac{\binom{5n}{n}}{4n+1}a^{4n+1}
$$
is a solution of the quintic equation $x^5+x=a$, provided that $\vert a \vert \le 5^{-5/4}$ (to ensure the 
convergence of the series). Eisenstein's formula can be proved using Lagrange inversion 
formula. 
More generally, given a polynomial $P(x)\in \mathbb C[x]$, it is known that 
multivariate series can be used to find expressions of the roots of $P$ in terms of its coefficients $p_j$. 
For example in~\cite{sturmfels}, it is shown that these roots can be formally
expressed as $A$-hypergeometric series evaluated at rational powers of the $p_j$'s. 
($A$-hypergeometric series are an example of multivariate $G$-functions.) 
It is not clear how such a representation could be used to prove Lemma~\ref{prop:2} below: 
beside the multivariate aspect, the 
convergence of the series imposes some conditions on the $p_j$'s and 
their exponents are not integers in general. Our proof 
is more in Eisenstein's spirit.

\begin{lem}\label{prop:2}
 Let $\al \in \Qb$, and $Q(X) \in \Q[X]$ be a non-zero polynomial of which $\al$ 
is a simple root. For any $u \in \Q(i)$ such that $Q'(u) \neq 0$, the series
$$
\Phi_u(z)= u+\sum_{n=1}^{\infty} (-1)^n\frac{Q(u)^n}{n!}\frac{\partial^{n-1}}{\partial x^{n-1}} 
\left(\Big(\frac{x-u}{Q(x)-Q(u)}\Big)^n\right)_{\vert x=u}z^n
$$
is a $G$-function with coefficients in $\mathbb Q(i)$; it satisfies the equation $Q(\Phi_u(z)) = (1-z)Q(u)$. 

For any $R \geq 1$, if $u$ is  close enough to $\al$  then the \cvr of $\Phi_u$  is $> R$ and 
$\al=\Phi_u(1) \in \GACRQdei$.
 
Accordingly we have  $\Qbar \subset \GACQdei$.
\end{lem}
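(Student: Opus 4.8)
The plan is to prove Lemma~\ref{prop:2} by verifying the functional equation $Q(\Phi_u(z)) = (1-z)Q(u)$ via the Lagrange inversion formula, and then reading off all the arithmetic and analytic properties from that identity. First I would set up the inversion as follows: consider the equation $Q(x) = (1-z)Q(u)$, which we want to solve for $x = \Phi_u(z)$ near $z = 0$ with $\Phi_u(0) = u$. Writing $w = z Q(u)$, this reads $Q(x) - Q(u) = -w$; since $Q'(u) \ne 0$, the function $x \mapsto Q(x) - Q(u)$ is invertible near $x = u$, and Lagrange inversion gives the root as a power series in $w$ whose coefficients are exactly the $\frac{1}{n!}\frac{\partial^{n-1}}{\partial x^{n-1}}\big((x-u)/(Q(x)-Q(u))\big)^n\big|_{x=u}$ appearing in the statement. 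Substituting back $w = zQ(u)$ produces the displayed series $\Phi_u(z)$, and by construction it satisfies $Q(\Phi_u(z)) = (1-z)Q(u)$ as formal power series, hence as analytic functions wherever the series converges.

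Next I would check that $\Phi_u$ is a $G$-function with coefficients in $\Q(i)$. The coefficients lie in $\Q(i)$ because $Q \in \Q[X]$ and $u \in \Q(i)$, so every derivative and evaluation stays in $\Q(i)$; this also uses that $\Q(i)$ is a field so the rational function $(x-u)/(Q(x)-Q(u))$ has a Taylor expansion at $x=u$ with coefficients in $\Q(i)$. For the growth and denominator conditions, the cleanest route is to invoke that $\Phi_u$ is algebraic over $\Q(i)(z)$ — indeed it satisfies the polynomial equation $Q(y) - (1-z)Q(u) = 0$ of fixed degree $\deg Q$ in $y$ with coefficients in $\Q(i)[z]$, and it is the branch holomorphic at $z=0$ — hence by Eisenstein's theorem and the fact recalled in \S\ref{ssec:gal} that algebraic functions holomorphic at $0$ are $G$-functions, $\Phi_u$ is a $G$-function. (One should note the branch is well-defined precisely because $\alpha$, resp.\ $u$, is a simple root, so the relevant branch of the algebraic function is analytic; this is where the simple-root hypothesis is used.)

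For the radius-of-convergence and continuity claim, the key point is that the radius of convergence of $\Phi_u$ is governed by the distance from $z=0$ to the nearest branch point of the algebraic function $y \mapsto Q(y) = (1-z)Q(u)$, i.e.\ the nearest $z$ at which two roots collide. As $u \to \alpha$, the branch we follow tends to the branch through $\alpha$, and since $\alpha$ is a simple root of $Q$, the value $z=1$ (where $Q(y) = 0$) is not a branch point of that limiting branch; by continuity of the roots of a polynomial in its coefficients, the nearest branch point stays bounded away from $0$ by more than $R$ once $u$ is close enough to $\alpha$. Concretely one can estimate the coefficients of $\Phi_u$ directly from the Lagrange formula — they are bounded by $C^n$ with $C$ depending continuously on $u$ and on $\max|Q(u)|$, $\min|Q'(u)|$ — and make $C < 1/R$ by taking $u$ near $\alpha$; then $\Phi_u(1)$ converges and equals a root of $Q(y) = 0$ near $\alpha$, which must be $\alpha$ itself. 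This shows $\alpha = \Phi_u(1) \in \GACRQdei$ for every $R \ge 1$, hence $\alpha \in \GACQdei$.

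Finally, the last sentence $\Qbar \subset \GACQdei$ follows immediately: every $\alpha \in \Qbar$ is a simple root of its minimal polynomial $Q \in \Q[X]$ (minimal polynomials are separable), and one can choose $u \in \Q(i)$ arbitrarily close to $\alpha$ with $Q'(u) \ne 0$ since $Q'$ has only finitely many zeros and $\Q(i)$ is dense in $\C$; applying the previous paragraph for each $R$ gives $\alpha \in \bigcap_{R \ge 1}\GACRQdei = \GACQdei$. I expect the main obstacle to be the uniform control of the radius of convergence as $u \to \alpha$: one must argue carefully that following "the branch through $u$" is legitimate and that its singularities depend continuously on $u$, so that the relevant branch point genuinely escapes past radius $R$; the Lagrange-inversion coefficient estimate is the safest way to make this rigorous without invoking heavier facts about families of algebraic functions.
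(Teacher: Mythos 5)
Your overall strategy coincides with the paper's: use Lagrange inversion to identify $\Phi_u$ as the branch of $Q(y)=(1-z)Q(u)$ through $y=u$, deduce it is algebraic over $\Q(i)(z)$ and hence a $G$-function, show the radius of convergence blows up as $u\to\alpha$, and conclude $\Phi_u(1)=\alpha$. The setup, the functional equation, the $G$-function step, and the final identification of $\Phi_u(1)$ are all essentially identical to the paper.

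The one place where your write-up has a genuine gap is the radius-of-convergence estimate, and you half-acknowledge this. You claim the Lagrange coefficients are ``bounded by $C^n$ with $C$ depending continuously on $u$''; but continuity of $C$ in $u$ alone buys nothing --- one needs to know that the bound has the specific form $\vert\phi_n(u)\vert\le K\cdot(\vert Q(u)\vert/\rho)^n$ with $\rho,K$ \emph{independent} of $u$, so that the geometric ratio shrinks to $0$ precisely because $Q(u)\to Q(\alpha)=0$. Extracting such a bound directly from the $(n-1)$-fold derivative of $\bigl((x-u)/(Q(x)-Q(u))\bigr)^n$ at $x=u$ is not straightforward: the $n!$ in the denominator compensates the combinatorial growth, but controlling the remaining constant uniformly in $u$ by hand is awkward. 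The paper avoids this by rewriting $\phi_n(u)$ as a Cauchy integral
$$\phi_n(u)=\frac{Q(u)^n}{2\pi i}\int_{\mathscr C}\frac{\dd z}{(Q(u)-Q(z))^n},$$
with the contour $\mathscr C$ chosen once and for all, independent of $u$, so that $u$ is inside $\mathscr C$ and the other roots $\beta_2(u),\dots,\beta_d(u)$ of $Q(X)-Q(u)$ stay outside; one then reads off $\vert Q(u)-Q(z)\vert\ge\rho>0$ on $\mathscr C$ uniformly in $u$ near $\alpha$. That $u$-independent contour is the missing idea: without it neither your direct Lagrange estimate nor your branch-point argument (which also implicitly needs singularities to move \emph{uniformly} away from $0$) closes the proof. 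The final paragraph on $\Qbar\subset\GACQdei$ is fine, except note the paper allows $Q$ to be any polynomial with $\alpha$ as a simple root, not just the minimal polynomial, and handles $\deg Q=1$ and $Q(u)=0$ as trivial cases up front.
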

\begin{Remarks}
a) The proof can be made effective, i.e., given $\alpha$,  $Q$ and $R$,
 we can compute $\varepsilon(\alpha,Q,R)$ such that for any $u\in \mathbb{Q}(i)$ with 
$\vert \alpha-u\vert<\varepsilon(\alpha,Q,R)$, we have $\Phi_u(1)=\alpha$ and  the \cvr of $\Phi_u$ is $>R$.

b) Using Lemma~\ref{lem:algebra}$(ii)$,   we deduce that any real algebraic number is in 
$\GAC_{\mathbb{Q}}$. 
\end{Remarks}

We also need a similar property for values of the logarithm.

\begin{lem} \label{lem:log}
Let $\alpha \in \Qb\etoile$. For any determination of the logarithm, the number 
$\log(\alpha)$ belongs to $\GAC_{\mathbb{Q}(i)}$.
\end{lem}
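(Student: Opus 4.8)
The plan is to deduce Lemma~\ref{lem:log} from Lemma~\ref{prop:2} together with the standard series for the logarithm. First I would reduce to a convenient normalization. Writing $\alpha = \rho e^{i\theta}$ with $\rho = |\alpha| \in \Qbar \cap \R_{>0}$, a given determination of $\log \alpha$ differs from the principal one by an integer multiple of $2\pi i$; since $\pi = 4\arctan(1) \in \GAC_{\mathbb Q(i)}$ (as $\arctan(z) = \sum_{n\ge 0}(-1)^n z^{2n+1}/(2n+1)$ is a $G$-function, and $i \in \mathbb Q(i)$), it suffices to treat one determination of $\log\alpha$, say the one obtained by analytic continuation of the principal branch along a fixed path. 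Moreover, for any nonzero $\beta \in \Qbar$ one has $\log(\alpha) = \log(\alpha/\beta) + \log(\beta)$ for suitable determinations, and $\GAC_{\mathbb Q(i)}$ is a ring; so I may multiply $\alpha$ by any fixed nonzero algebraic number, in particular I may assume $\alpha$ is as close as I like to $1$.

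The core of the argument is then that $\log(1+w) = \sum_{n\ge 1}(-1)^{n-1}w^n/n$ is a $G$-function of $w$ (its coefficients $(-1)^{n-1}/n$ have denominators growing at most geometrically, and it satisfies $(1+w)y' = 1$, i.e. a linear ODE over $\mathbb Q(w)$), with \cvr equal to $1$. Now the point of Lemma~\ref{prop:2} is precisely that I can realize an algebraic number $\alpha$ close to $1$ as $\Phi_u(1)$ where $\Phi_u$ is a $G$-function with coefficients in $\mathbb Q(i)$ and arbitrarily large \cvr, and $\Phi_u(0) = u \in \mathbb Q(i)$ is close to $\alpha$. Composing: set $g(z) = \log\big(\Phi_u(z)\big) = \log\big(u + (\Phi_u(z) - u)\big) = \log(u) + \log\big(1 + (\Phi_u(z)-u)/u\big)$. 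Since $\Phi_u(z) - u = O(z)$ has coefficients in $\mathbb Q(i)$ and large \cvr, and composition of the $G$-function $\log(1+w)$ (radius $1$) with the $G$-function $(\Phi_u(z)-u)/u$ (vanishing at $0$, coefficients in $\mathbb Q(i)$, \cvr $>R'$) again produces a $G$-function whose \cvr stays $>R$ once $u$ is chosen close enough to $\alpha$ — because $(\Phi_u(z)-u)/u$ is then uniformly small on a disk of radius $>R$ — the function $h(z) := \log\big(1 + (\Phi_u(z)-u)/u\big)$ is a $G$-function with coefficients in $\mathbb Q(i)$ and \cvr $>R$. Evaluating at $z=1$ gives $h(1) = \log\big(\Phi_u(1)/u\big) = \log(\alpha/u) \in \GAC_{\mathbb Q(i)}$ (for an appropriate determination), and since $\log u \in \GAC_{\mathbb Q(i)}$ by the rational case (treated directly via the same series, as $u \in \mathbb Q(i)$ can be moved near $1$ by multiplying by a nonzero rational), we conclude $\log\alpha \in \GAC_{\mathbb Q(i)}$.

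Two bookkeeping points deserve care, and the second is the main obstacle. The first is the radius-of-convergence estimate for the composite: one needs that for every $R\ge 1$ there is a choice of $u \in \mathbb Q(i)$ near $\alpha$ with $\Phi_u$ of \cvr $>R$ (guaranteed by Lemma~\ref{prop:2}) and simultaneously $|\Phi_u(z) - u| < |u|$ on $|z|\le R$, so that $\log(1+\cdot)$ is applied inside its disk of convergence; this follows because on a fixed disk $\Phi_u$ is uniformly close to the constant $u$ when $u$ is close to $\alpha$ — one should either invoke continuity of the coefficients of $\Phi_u$ in $u$ (visible from the explicit Lagrange-inversion formula) or, more cleanly, note $\Phi_u(z)$ satisfies $Q(\Phi_u(z)) = (1-z)Q(u)$, so $\Phi_u(z)$ stays near the simple root $\alpha$ of $Q$ uniformly for $z$ in a bounded region once $u$ is near $\alpha$. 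The second, genuinely delicate, point is matching the \emph{determinations} of the logarithm: the value $h(1)$ produced by the construction is $\log(\alpha/u)$ along a specific path (the image under $\Phi_u$ of the segment from $0$ to $1$), and I must check this, combined with $\log u$, yields the prescribed determination of $\log\alpha$ up to an integer multiple of $2\pi i$ — which is then absorbed using $\pi \in \GAC_{\mathbb Q(i)}$ as above. Keeping track of the winding of $\Phi_u([0,1])$ around $0$ is the part requiring the most attention, though since $\Phi_u$ stays in a small neighborhood of $\alpha \ne 0$ the relevant path does not wind around $0$ at all, which should make the branch matching routine.
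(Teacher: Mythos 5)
Your overall plan follows the paper's, but there are two genuine gaps, one of which would sink the argument as written.

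The fatal one is the reduction to $\alpha$ close to $1$. You write $\log\alpha = \log(\alpha/\beta) + \log\beta$ and claim that, since $\GAC_{\Q(i)}$ is a ring, you may therefore assume $\alpha$ is as close to $1$ as you like. This is circular: to absorb $\log\beta$ you need $\log\beta \in \GAC_{\Q(i)}$ for a generic nonzero $\beta \in \Qbar$, which is exactly the statement you are trying to prove. (Choosing $\beta \in \Q(i)$ near $\alpha$ does not help, because then $\beta$ is near $\alpha$, not near $1$, and $\log\beta$ is no longer handled by the $\log(1+z)$ and $\arctan$ series with large radius.) The paper avoids this by a different reduction: $\log\alpha = n\log(\alpha^{1/n})$ for a suitable $n$-th root, and $\alpha^{1/n}\to 1$ as $n\to\infty$; this replaces the algebraic number by \emph{one} algebraic number close to $1$ rather than introducing a second factor whose logarithm is not yet controlled. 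Once $\alpha$ is close to $1$, Lemma~\ref{prop:2} gives $u\in\Q(i)$ close to $1$ automatically, so the $\log u$ step is immediate via the radius-$1$ series for $\log(1+z)$ and $\arctan$ together with Lemma~\ref{lemR}; your ``move $u$ near $1$ by multiplying by a rational'' does not work for $u$ with nonnegligible imaginary part and is in any case not needed after the correct reduction.

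The second gap is the appeal to closure of $G$-functions under composition. Composition $f\circ g$ of two $G$-functions (even with $g(0)=0$) is not a standard stability property of the class: the differential-equation condition $(iii)$ in Definition~\ref{def:gfunc} is not known to be preserved in general. What makes your $h(z) = \log\bigl(1+(\Phi_u(z)-u)/u\bigr)$ a $G$-function is that the inner function is \emph{algebraic} over $\Qbar(z)$, since $Q(\Phi_u(z)) = (1-z)Q(u)$. The paper exploits this concretely: $\frac{d}{dz}\log\bigl(1+\Psi_u(z)\bigr) = \Psi_u'(z)/(1+\Psi_u(z))$ is algebraic and holomorphic at $0$, hence a $G$-function with coefficients in $\Q(i)$, and the primitive with zero constant term is again a $G$-function because the class is stable under Hadamard product with $\sum z^{n+1}/(n+1)$. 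You should make this explicit rather than invoke a composition theorem that is not available. (A similar remark applies to your passing claim that $\pi = 4\arctan(1) \in \GAC_{\Q(i)}$ ``because $\arctan$ is a $G$-function'': $\arctan$ has radius $1$, so this value sits on the boundary and needs either an arctan addition-formula reduction to small arguments, or, as the paper does, it is deduced from the lemma itself applied to $\alpha = -1$ with the principal determination.) Apart from these points, your structure — $\alpha = \Phi_u(1)$, splitting $\log\alpha = \log(\alpha/u) + \log u$, and treating the two pieces via $\log(1+\Psi_u)$ and a direct series for $u\in\Q(i)$ near $1$ — matches the paper's proof.
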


\subsection{Algebraic numbers} 
\begin{proof}[Proof of Lemma~\ref{prop:2}]
If $\deg Q = 1$ then $\Phi_u(z)= u+(\al - u)z$ so that Lemma~\ref{prop:2} holds trivially. 
From now on we assume $\deg Q \geq 2$. 
Then $\frac{Q(X)-Q(u)}{X-u}$ is a non-constant polynomial with coefficients in $\Q(i)$; 
its value at $X=u$ is $Q'(u) \neq 0$ so that 
the coefficients of $\Phi_u(z)$ are well-defined and belong to $\Q(i)$. If $Q(u) = 0$ 
then $\Phi_u(z) = u$ and the result is trivial, so that we may assume $Q(u ) \neq 0$ 
and define the polynomial function 
$$
z_u(t)=1-\frac{Q(t+u)}{Q(u)} \in \mathbb Q(i)[t]
$$
so that $z_u(0)=0$ and $z_u'(0)=-\frac{Q'(u)}{Q(u)}\neq 0$. Hence $z_u(t)$ can be locally 
inverted around $t=0$ and its inverse 
$t_u(z)=\sum_{n\ge 1} \phi_n(u)z^n$ is holomorphic at $z=0$. 

The Taylor coefficients 
of $t_u$ can be computed by means of 
Lagrange inversion formula~\cite[p. 732]{flajolet}
which in this 
case gives $\Phi_u(z) = u + t_u(z)$. By definition of $t_u(z)$, this implies $Q(\Phi_u(z)) = (1-z)Q(u)$. 
Therefore $\Phi_u$ is an algebraic function hence it is a 
$G$-function.

Now let 
$$
\phi_n(u) =\frac{(-Q(u))^n}{n!}\frac{\partial^{n-1}}{\partial x^{n-1}} 
\left(\Big(\frac{x-u}{Q(x)-Q(u)}\Big)^n\right)_{\vert x=u} 
$$
denote, for $n \geq 1$, the coefficient of $z^n$ in $\Phi_u(z)$. Then for any $n \geq 1$ we have 
\begin{equation} \label{eq:33}
\phi_n(u) = \frac{Q(u)^n}{2i\pi}\int_{\mathscr{C}} \frac{\dd z}{(Q(u)-Q(z))^n}
\end{equation}
where $\mathscr{C}$ is a closed path surrounding $u$ but no other roots 
of the polynomial $Q(X)-Q(u)$. This enables us to 
 get an upper bound on the growth of 
the coefficients $\phi_n(u)$. Let us denote by 
$\beta_1(u)=u, \beta_2(u), \ldots, \beta_d(u)$ the roots (repeated according to 
their multiplicities) of the polynomial $Q(X)-Q(u)$, with $d = \deg Q \geq 2$. 
We take $u$ close enough to $\alpha$ so that $\beta_2(u), \ldots, \beta_d(u)$ 
are also close to the other roots $\alpha_2, \ldots, \alpha_d$ of the polynomial 
$Q(X)$.
Since $\alpha$ is a simple root of $Q(X)$, we have $\alpha \not\in\{\alpha_2, \ldots, \alpha_d\}$.
 We can then choose the smooth curve $\mathscr{C}$ in~\eqref{eq:33} independent from $u$ such that the 
distance from $\mathscr{C}$ to any one of $u, \beta_2(u), \ldots, \beta_d(u)$ 
is $\geq \eps>0$ with $\eps$ also 
independent from $u$, in such a way that $u$ lies inside $\mathscr{C}$ and 
$\beta_2(u), \ldots, \beta_d(u)$ outside 
$\mathscr{C}$.~(\footnote{We do so because we want to use a curve $\mathscr{C}$ 
that does not depend of $u$, whereas the poles of the integrand move with $u$.}) 
It follows in particular that, for 
any $z\in \mathscr{C}$, $\vert Q(u)-Q(z)\vert \ge \rho$ for some $\rho>0$ independent from $u$.
Hence 
$ 
\displaystyle \max_{z\in \mathscr{C}} \Big\vert \frac{1}{Q(u)-Q(z)} \Big\vert \le \frac1{\rho} 
$. 
From the Cauchy integral in~\eqref{eq:33}, we deduce that
\begin{equation} \label{eq43bis}
\vert \phi_n(u)\vert \le \frac{\vert \mathscr{C}\vert }{2\pi}\cdot\frac{\vert Q(u)\vert^n}{\rho^n},
\end{equation} 
where $\vert \mathscr{C}\vert$ is the length of $\mathscr{C}$. 
Let $R \geq 1$. Since $Q(u)\to Q(\alpha)=0$  as  $u\to \alpha$, we deduce that the radius of convergence 
of $\Phi_u(z)$ is $>R$ provided that $u$ is sufficiently close to $\alpha$ 
(namely as soon as $R \vert Q(u)\vert<\rho$).
Then
the series $\Phi_u(1)$ is absolutely convergent and we have 
\begin{equation} \label{eq44bis}
\vert \Phi_u(1)-u\vert = \bigg \vert \sum_{n=1}^{\infty} 
\phi_n(u)\bigg\vert \le \frac{\vert \mathscr{C}\vert }{2\pi} 
\sum_{n=1}^{\infty}\frac{\vert Q(u)\vert^n}{\rho^n} 
= \mathcal{O}\big(\vert Q(u) \vert\big).
\end{equation} 
Therefore $\Phi_u(1)$ can be made arbitrarily close to $u$, and 
accordingly arbitrarily close to~$\alpha$. 
Now for any $z$ inside the disk of convergence of 
$\Phi_u$ we have $Q(\Phi_u(z)) = (1-z)Q(u)$, so 
that $\Phi_u(1)$ is a root of $Q(X)$. 
If it is sufficiently close to $\alpha$, it has to be $\alpha$. This 
completes the proof of Lemma~\ref{prop:2}.
\end{proof}

\subsection{Logarithms of algebraic numbers}

\begin{proof}[Proof of Lemma~\ref{lem:log}]
Throughout this proof, we will always consider the determination of $\log z$ 
of which the imaginary part belongs to $(-\pi, \pi]$ (but the result holds 
for  any determination because $  i \pi = \log(-1) \in \GACQdei$).

Using the formula $\log(\alpha) = n \log(\alpha^{1/n})$ with $n$ sufficiently 
large, we may  assume that $\alpha $ is arbitrarily close to 1; in particular 
the imaginary part of $\log \alpha$ gets arbitrarily close to 0.

Letting $Q(X)$ denote the minimal polynomial of $\alpha$, we keep the notation in 
 the proof of Lemma~\ref{prop:2}, and write $\al=\Phi_u(1) = u+u\Psi_u(1)$ where $u\in \mathbb Q(i)$ is 
close enough to $\alpha$, 
$\Psi_u(1)$ is 
in $\GAC_{\mathbb{Q}(i)}$ 
and $\Psi_u(0)=0$. By Equation~\eqref{eq43bis}, the radius of convergence at $z=0$ 
of the $G$-function $\Psi_u(z)$
can be taken arbitrarily 
large provided that $u \in \mathbb{Q}(i)$ is close enough to $\al$.
We have 
$$
\log(\al)=\log(\al/u)+\log(u) =\log\big(1+\Psi_u(1)\big)+\log(u),
$$
because all logarithms in this equality have imaginary parts arbitrarily close to 0. 
Let $ R \geq 1$;  we shall  prove, if $u$ is close enough to 1,  that  both 
$\log(1+\Psi_u(1))$ and  $\log(u)$ belong to $\GACRQdei$.

\medskip

\noindent a) Provided that $u$ is close enough 
to $\al$, reasoning as in Equation~\eqref{eq44bis} we get $\vert \Psi_u(z)\vert <1$ for all $z$ 
in a disk of center $0$ and radius $>R$. Hence for such a $u$,  the radius of 
convergence of the Taylor series of $\log(1+\Psi_u(z))$ at $z=0$ is $>R \geq 1$. 
To see that it is a $G$-function with coefficients in $\mathbb{Q}(i)$, we observe that 
 $
\frac{d}{dz}\log\big(1+\Psi_u(z)\big) = \frac{\Psi_u'(z)}{1+\Psi_u(z)}
 $
is an algebraic function holomorphic at the origin: 
its Taylor series is a $G$-function
 $
\sum_{n = 0}^{\infty} a_n z^n \in\mathbb{Q}(i)[[z]].
 $
Therefore 
$\log(1+\Psi_u(z)) = \sum_{n=0}^{\infty} \frac{a_n}{n+1} z^{n+1} \in\mathbb{Q}(i)[[z]]$; 
this is a $G$-function because the set of $G$-functions is stable under 
Hadamard product and both 
$\sum_{n=0}^{\infty} a_n z^{n+1}$ and $\sum_{n = 0}^{\infty} \frac{1}{n+1} z^{n+1}$ 
are $G$-functions. Whence, $\log(1+\Psi_u(1)) \in \GACRQdei$.

\medskip

\noindent b) It remains to prove that $\log(u) \in \GACRQdei$ for any 
$u \in \Q(i)$ sufficiently close to 1. Let $a,b\in \Q$ be such that $u=a+ib$. Then we have 
$$
\log(u)=\frac12\log(a^2+b^2)+i\arctan\Big(\frac ba\Big).
$$
Now $\log(1+z) = \sum_{n=1}^\infty \frac{(-1)^{n-1}}{n} z^n$ and $\arctan(z)  
= \sum_{n=0}^\infty \frac{(-1)^n}{2n+1} z^{2n+1}$ are $G$-functions with 
rational coefficients and \cvr $=1$, and we may assume 
that $|a^2+b^2-1 |  < 1/R$ and $|b/a| < 1/R$. Then $\log(u) \in \GACRQdei$ (see Lemma~\ref{lemR}). 
\end{proof}

\section{Analytic continuation and connection constants} \label{sec:anacont}

\subsection{Properties of differential equations of $G$-functions}\label{ssec:ack}

Let $\K$ be an algebraic extension of $\Q$, and $f(z) = \sum_{n =0}^{\infty} 
a_n z^n \in \K [[z]]$ be a $G$-function with coefficients $a_n \in \K$.
Let $L$ be a minimal differential equation with coefficients in $\K[z]$ of 
which $f(z)$ is a solution. We denote by $\xi_1,\ldots,\xi_p\in \C$  the 
singularities  of $L$ (throughout this paper,  we will consider only points at finite distance). 
For any $i \in \unp$, let $\Delta_i$ be a closed broken line from  $\xi_i$ 
to the point at infinity; we assume $\Delta_i \cap \Delta_j = \emptyset$ 
for any $i \neq j$, and let $\calD = \C \setminus ( \Delta_1 \cup 
\ldots \cup \Delta_p)$: this is a simply connected open subset of $\C$. 
In most cases we shall take for $\Delta_i$  a closed half-line 
starting at $\xi_i$.

The differential equation $Ly=0$ has holomorphic solutions on $\calD$, 
and these solutions make up a $\C$-vector space of dimension equal to 
the order of $L$; a basis of this vector space will be referred to as  a {\em basis of solutions of $L$}.

Let $\zeta $ be  a singularity of $L$. Then for any 
 sufficiently small open disk $D$ centered at $\zeta$, the intersection 
$D \cap \calD$ is equal to $D$ with a ray removed; let us choose a 
determination of the logarithm of $\zeta-z$, denoted by $\log(\zeta-z)$, 
for $z \in D \cap \calD$ (in such a way that it is holomorphic in $z$). 
If $\zeta \in \calD$ is not a singularity of $L$, the function  $\log(\zeta-z)$  will cancel out in what follows.
 
We shall use the following theorem (see~\cite[p. 719]{andre2} for a 
discussion).

\begin{theo}[Andr\'e, Chudnovski, Katz]\label{theo:ack} 
Let $\mathbb{K}$ denote an algebraic extension 
of $\mathbb{Q}$. 
Consider a minimal differential equation $L$ of order $\mu$, with 
coefficients in $\mathbb K[z]$ 
and admitting a solution at $z=0$ which is a $G$-function in 
$\mathbb K[[z]]$. Let $\calD$, $\xi_1$,\ldots, $\xi_p$ be as above. Then 
$L$ is fuchsian with rational exponents at each of its singularities, 
and for each point 
$\zeta\in \calD \cup \{\xi_1 ,\ldots, \xi_p\} $ there is a basis of 
solutions $(g_1(z),\ldots,g_\mu(z))$ of $L$, holomorphic on $\calD$, 
with the following properties:
\begin{itemize}
\item   There exists an open disk $D$ centered at $\zeta$ and functions 
$F_{s,t,j}(z)$, holomorphic at 0, such that for any $j \in \unmu$ and any $z  \in D \cap \calD$:
$$g_j(z) = \sum_{s \in S_j} \sum_{t \in T_j} \big(\log(\zeta-z)\big)^s (\zeta-z)^t F_{s,t,j}(\zeta-z)$$
where $S_j \subset \N$ and $T_j \subset \Q$ are finite subsets.
\item If $\zeta\in \K$ then the functions $F_{s,t,j}(z) $ are $G$-functions 
with coefficients in $\K$.
\item If $\zeta$ is not a singularity of $L$ then $S_j=T_j = \{0\}$ for 
any $j$, so that   $g_1(z) ,\ldots,g_\mu(z)$ are holomorphic at $z=\zeta$.
\end{itemize}
\end{theo}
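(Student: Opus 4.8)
The plan is to obtain Theorem~\ref{theo:ack} by assembling the structure theory of $G$-operators with classical Fuchs--Frobenius theory, so that the genuinely deep input is imported rather than reproved.

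\emph{Step 1: reduction to the structure theorem for $G$-operators.} Since $f\in\K[[z]]$ is a $G$-function and $L$ is a \emph{minimal} differential operator over $\K[z]$ annihilating it, $L$ is a $G$-operator: it is globally nilpotent and satisfies Galochkin's condition (Bombieri, Chudnovski; see \cite{bombieri}, \cite{chud}, \cite{galosh}, \cite{andre}). The André--Chudnovski--Katz theorem then states that a $G$-operator is fuchsian and has rational exponents at each of its singularities (see also the discussion in \cite{andre2} and \cite{dgs}). This already yields the first assertion of the theorem, with no further work on our part.

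\emph{Step 2: local Frobenius basis and continuation to $\calD$.} Fix $\zeta\in\calD\cup\{\xi_1,\ldots,\xi_p\}$. If $\zeta\notin\{\xi_1,\ldots,\xi_p\}$ it is an ordinary point of $L$, and the Cauchy existence theorem provides a basis of solutions holomorphic at $\zeta$, i.e.\ with $S_j=T_j=\{0\}$ for all $j\in\unmu$; this is the third bullet. If $\zeta$ is one of the singularities, which is now known to be fuchsian with rational exponents, the Frobenius method on the slit disk $D\cap\calD$ produces a basis $(g_1,\ldots,g_\mu)$ of the announced shape $g_j(z)=\sum_{s\in S_j}\sum_{t\in T_j}(\log(\zeta-z))^s(\zeta-z)^t F_{s,t,j}(\zeta-z)$, where each $F_{s,t,j}$ is holomorphic at $0$, $T_j\subset\Q$ is a finite set of representatives modulo $\Z$ of the local exponents, and $S_j\subset\N$ is finite (logarithms occur only when exponents differ by an integer, with multiplicity at most $\mu$). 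To upgrade this into a basis holomorphic on all of $\calD$, I would analytically continue the local basis along paths in $\calD$: since $\calD$ is simply connected and contains no singularity of $L$, the continuation is single-valued and remains a basis of the (still $\mu$-dimensional) solution space on $\calD$.

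\emph{Step 3: the arithmetic refinement, and the main obstacle.} It remains to show that, when $\zeta\in\K$, the holomorphic parts $F_{s,t,j}$ are $G$-functions with coefficients in $\K$. Here I would perform the change of variable $w=\zeta-z$, legitimate because $\zeta\in\K$; this turns $L$ into an operator $\widetilde L$ with coefficients in $\K[w]$. Being a $G$-operator is preserved under such a $\K$-rational change of variable, so $\widetilde L$ is again a $G$-operator, fuchsian with rational exponents at $w=0$. The substantive point is then the arithmetic part of the André--Chudnovski--Katz theorem: a $G$-operator admits, at each of its algebraic singularities, a Frobenius basis whose power-series components are themselves $G$-functions over the base field (the exponents being rational, hence in $\Q\subset\K$, causes no trouble). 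Concretely, for fixed $s$ and $t$ the Taylor coefficients of $F_{s,t,j}$ satisfy an inhomogeneous recurrence with coefficients in $\K(n)$ read off from $\widetilde L$, and the size and denominator estimates built into Galochkin's condition for $\widetilde L$ bound the conjugates and common denominators of these coefficients geometrically, which is exactly what makes $F_{s,t,j}\in\K[[w]]$ a $G$-function. Transporting back to the variable $z$ via $w=\zeta-z$ gives the $F_{s,t,j}(\zeta-z)$ of the statement. This last step is where essentially all the difficulty resides; but it is precisely the content of the structure theorem quoted above, so the work is to cite it accurately and carry out the bookkeeping of the variable change rather than to reprove it.
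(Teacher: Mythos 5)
The paper does not prove Theorem~\ref{theo:ack}: it is stated as a citation to the Andr\'e--Chudnovski--Katz structure theory of $G$-operators, with a pointer to \cite{andre2} for a discussion and the more precise matrix form $(\zeta-z)^{C_\zeta}$ recorded just afterward. Your reconstruction correctly identifies the logical dependence the paper relies on --- minimality gives Galochkin's condition hence a $G$-operator; the ACK theorem gives fuchsianity with rational exponents; Frobenius theory together with simple connectedness of $\calD$ gives the single-valued local shape; and the arithmetic refinement that the $F_{s,t,j}$ at $\K$-rational points (ordinary as well as singular, a case your citation sentence should also name) are $G$-functions over $\K$ is the genuinely deep imported input --- so it is consistent with the paper's treatment.
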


This theorem is usually stated in a more precise form, namely 
$$(g_1(z),\ldots,g_\mu(z)) = 
\big(f_1 (\zeta-z),f_2(\zeta-z), \ldots, f_\mu(\zeta-z)\big)\cdot \big(\zeta-z\big)^{C_\zeta}$$
where the functions $f_j(z)$ are holomorphic at 0  and $C_\zeta$ is an upper 
triangular matrix, and a similar formulation holds for the singularity at infinity, where 
one replaces $\zeta-z$ by $1/z$. However this precise version won't be used in this paper.

\subsection{Statement of the theorem on connection constants} \label{ssec:thcc}

Let $\K$, $f$, $L$ and $\calD$ be as in \S\ref{ssec:ack}. Let 
$(g_1,\ldots, g_\mu)$ denote a basis of the $\C$-vector space of 
holomorphic solutions on $\calD$ of the differential equation 
$Ly=0$; here $\mu$ is the order of $L$. Since $f \in \K[[z]]$ 
satisfies $Lf=0$ and is holomorphic on a small open disk centered 
at 0, it can be analytically continued to $\calD$ and expanded in the basis $(g_1,\ldots, g_\mu)$:
\begin{equation} \label{eqcc}
f(z)=\sum_{j=1}^\mu \varpi_j g_j(z)
\end{equation}
for any $z \in \calD$, where $\varpi_1,\ldots,\varpi_\mu \in \C$ are called {\em connection constants}.

The following theorem  is an important  ingredient in the proof of  
Theorems~\ref{theo:20} and~\ref{theo:10}.

\begin{theo} \label{theo:3}
Let $\mathbb{K}$ denote an algebraic extension 
of $\mathbb{Q}$. 
Consider a minimal differential equation $L$ of order $\mu$, with 
coefficients in $\mathbb K[z]$ 
and admitting a solution at $z=0$ which is a $G$-function 
$f \in \mathbb K[[z]]$. Let $\calD$, $\xi_1$, \ldots, $\xi_p$ 
 be as above, $\zeta \in \K \cap (\calD \cup \{\xi_1,\ldots,\xi_p\})$ 
and $(g_1,\ldots, g_\mu)$ be a basis of solutions given by 
Theorem~\ref{theo:ack}. Then the connection constants $\varpi_1,\ldots,\varpi_\mu$ 
defined by Equation~\eqref{eqcc} belong to $ \GAC_{\mathbb{K}(i)}$.
\end{theo}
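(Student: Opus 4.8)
The plan is to compute the connection constants $\varpi_j$ by analytically continuing $f$ from a neighbourhood of $0$ to a neighbourhood of $\zeta$ along a path inside $\calD$, and then to extract the $\varpi_j$ from the local expansions at $\zeta$ by a purely asymptotic argument. The key point is that the whole process — the local data at $0$, the transport, and the linear-algebra extraction at $\zeta$ — can be carried out using only values of $G$-functions at algebraic points, which are in $\GAC_{\K(i)}$ by Lemmas~\ref{prop:2}, \ref{lem:log}, \ref{lem3nv} and~\ref{lemR} together with the ring structure from Lemma~\ref{lem:algebra}.

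First I would pick an auxiliary point: take a point $\zeta_0 \in \K$ lying in $\calD$, not a singularity of $L$, and close to $0$ (inside the disk of convergence of $f$). At $\zeta_0$, Theorem~\ref{theo:ack} gives a basis $(h_1,\dots,h_\mu)$ of solutions holomorphic at $\zeta_0$, each $h_k(z)$ being (a shift of) a $G$-function with coefficients in $\K$; and $f$ itself is a $G$-function in the variable $z$, hence also in $\zeta_0 - z$. Expanding $f = \sum_k \lambda_k h_k$ near $\zeta_0$ reduces to solving a $\mu\times\mu$ linear system whose entries are the Taylor coefficients at $\zeta_0$ of $f$ and of the $h_k$ — these coefficients are values at $\zeta_0$ of $G$-functions with $\K$-coefficients (the successive derivatives), hence lie in $\GAC_\K \subset \GAC_{\K(i)}$ by Lemma~\ref{lemR}; since $\GAC_{\K(i)}$ is a ring containing all algebraic numbers, Cramer's rule gives $\lambda_k \in \GAC_{\K(i)}$, provided the system is invertible, which it is because $(h_1,\dots,h_\mu)$ is a basis (the Wronskian at $\zeta_0$ is nonzero). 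Next I would transport this along a chain of overlapping disks covering a broken-line path in $\calD$ from $\zeta_0$ to $\zeta$: at each step one changes from the local basis at one centre to the local basis at the next by a change-of-basis matrix whose entries are again obtained by matching Taylor expansions on the overlap, hence are ratios of $G$-function values at algebraic points — so they stay in $\GAC_{\K(i)}$. Composing finitely many such matrices (ring operations) expresses $f$ in the basis $(g_1,\dots,g_\mu)$ of Theorem~\ref{theo:ack} attached to $\zeta$, with coefficients $\varpi_j \in \GAC_{\K(i)}$.

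For the final extraction at $\zeta$, when $\zeta$ is a genuine singularity the basis elements $g_j$ have logarithmic/ramified local expansions $\sum_{s,t}(\log(\zeta-z))^s(\zeta-z)^t F_{s,t,j}(\zeta-z)$; here I would use Lemma~\ref{lem:regul} to read off, from the dominant term of $f - \sum_{j}\varpi_j g_j$ as $z\to\zeta$ in $\calD$, that the $\varpi_j$ are forced, and identify them successively with $\K(i)$-linear combinations of Taylor coefficients of the $F_{s,t,j}$ (values of $G$-functions at the algebraic point $0$ in the variable $\zeta-z$), again landing in $\GAC_{\K(i)}$. The main obstacle I anticipate is bookkeeping rather than conceptual: making sure that every intermediate quantity is genuinely a value of a $G$-function with coefficients in $\K$ (or $\K(i)$) evaluated at an algebraic point strictly inside its disk of convergence, so that Lemma~\ref{lemR} applies — in particular handling the logarithmic terms and the rational exponents cleanly (via Lemma~\ref{lem:log} for the logarithms and by absorbing algebraic constants using Lemma~\ref{lem:algebra}), and checking that one never needs to evaluate a $G$-function exactly on its circle of convergence except at the point $1$, which is exactly what the definition of $\GAC_{\K(i)}$ allows after the rescaling trick of Lemma~\ref{lemR}.
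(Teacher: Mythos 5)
Your overall strategy — analytic continuation of $f$ along a chain of overlapping disks in $\calD$ from a neighbourhood of $0$ to a neighbourhood of $\zeta$, computing each change-of-basis matrix from a $\mu\times\mu$ linear system, and invoking Lemma~\ref{lemR} to place the matrix entries in $\GAC_{\K(i)}$ — is exactly the approach the paper takes. However, there is a genuine gap at the step you pass over in a parenthesis: ``Cramer's rule gives $\lambda_k \in \GAC_{\K(i)}$, provided the system is invertible, which it is because the Wronskian at $\zeta_0$ is nonzero.'' Nonvanishing of the Wronskian $W$ only guarantees that the system has a unique solution in $\C$; it does not put that solution in the \emph{ring} $\GAC_{\K(i)}$, because Cramer's rule requires dividing by $W(\zeta_0)$, and $\GAC_{\K(i)}$ is not a field. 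Knowing that $W(\zeta_0)$ is the value of a $G$-function at an algebraic point tells you $W(\zeta_0)\in\GAC_{\K(i)}$, but says nothing about $1/W(\zeta_0)$; your phrase ``ratios of $G$-function values'' would at best land you in $\fff{\GAC_{\K(i)}}$, which is strictly weaker than the assertion of Theorem~\ref{theo:3}.

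The paper closes precisely this gap with Lemma~\ref{lem:wron}: since $L$ is fuchsian with rational exponents (Theorem~\ref{theo:ack}), the Wronskian is a nonzero \emph{algebraic} function, namely $\nu\prod_j(z-p_j)^{-r_j}$ with $\nu\in\Qbar^\star$, $p_j\in\Qbar$, $r_j\in\Q$. Hence $W(\rho)\in\Qbar^\star$ at any algebraic non-pole $\rho$, and then $1/W(\rho)\in\Qbar\subset\GAC_{\Q(i)}\subset\GAC_{\K(i)}$ by Lemma~\ref{prop:2}. Without this algebraicity of the Wronskian — a genuine structural fact about $G$-function differential operators, not a formal triviality — the Cramer's rule step does not yield membership in $\GAC_{\K(i)}$, and your proof breaks down. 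You should also note that the paper avoids your proposed ``asymptotic extraction'' of the $\varpi_j$ at the singular endpoint $\zeta$ via Lemma~\ref{lem:regul}; instead it again evaluates the basis $g_{j,\zeta}$ (and derivatives) at a nearby algebraic point $\rho_s\in\calD$ in the overlap disk — the logarithms and rational powers in the local expansion are absorbed using Lemmas~\ref{prop:2} and~\ref{lem:log} — and solves a linear system exactly as in the interior steps, with the same Wronskian lemma doing the work. Your asymptotic-matching alternative would face extra complications (possible ties between the leading exponents of different $g_j$), so it is cleaner and safer to treat $\zeta$ the same way as the intermediate points.
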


The following corollary is a consequence of Theorem~\ref{theo:3} 
and Lemma~\ref{lem:regul} (applied with $R = \GACKdei$). It is used in the proof of Theorem~\ref{theo:10}.

\begin{coro} \label{corf}
Let $\K$, $f$, $\calD$, $\zeta$ be as in Theorem~\ref{theo:3}. Then there exist
$c \in \GACKdei $, $\sigma \in \N$ and $\tau \in \Q$ such that, 
as $z \to \zeta$ with $z \in \calD$:
$$
f(z) = c \,  \big(\log (\zeta-z)\big) ^\sigma (\zeta-z)^\tau (1+o(1)).
$$
\end{coro}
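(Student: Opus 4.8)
The plan is to derive Corollary~\ref{corf} by combining the basis expansion coming from Theorem~\ref{theo:ack}, the arithmetic nature of the connection constants from Theorem~\ref{theo:3}, and the elementary asymptotic bookkeeping of Lemma~\ref{lem:regul}. First I would apply Theorem~\ref{theo:ack} at the point $\zeta \in \K \cap (\calD \cup \{\xi_1,\ldots,\xi_p\})$ to obtain a basis of solutions $(g_1,\ldots,g_\mu)$ of $L$, holomorphic on $\calD$, together with an open disk $D$ centered at $\zeta$ and, for each $j$, finite sets $S_j \subset \N$, $T_j \subset \Q$ and functions $F_{s,t,j}(z)$ holomorphic at $0$ with coefficients in $\K$ (since $\zeta \in \K$), such that
$$
g_j(z) = \sum_{s \in S_j} \sum_{t \in T_j} \big(\log(\zeta-z)\big)^s (\zeta-z)^t F_{s,t,j}(\zeta-z)
$$
for $z \in D \cap \calD$. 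Then I would write $f(z) = \sum_{j=1}^\mu \varpi_j g_j(z)$ on $\calD$ as in Equation~\eqref{eqcc}, where by Theorem~\ref{theo:3} the connection constants satisfy $\varpi_j \in \GACKdei$.

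Next I would substitute the expansions of the $g_j$ into this identity, regroup, and write
$$
f(z) = \sum_{s \in S} \sum_{t \in T} \big(\log(\zeta-z)\big)^s (\zeta-z)^t h_{s,t}(\zeta-z),
$$
where $S = \bigcup_j S_j$, $T = \bigcup_j T_j$ are finite, and each $h_{s,t}(w) = \sum_j \varpi_j F_{s,t,j}(w)$ is holomorphic at $0$ with Taylor coefficients in $\GACKdei$ (here I extend $F_{s,t,j}$ by $0$ when $(s,t) \notin S_j \times T_j$; this uses that $\GACKdei$ is a ring, by Lemma~\ref{lem:algebra}, so a $\GACKdei$-linear combination of power series with coefficients in $\GACKdei$ again has coefficients in $\GACKdei$). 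Setting $w = \zeta - z$, this is exactly the shape of the left-hand side of~\eqref{eqlemreg} in Lemma~\ref{lem:regul}, with ${\mathbb A} = \GACKdei$, and with $z \to 0$ in the region $\Omega = \{\zeta - z : z \in \calD,\ z \in D\}$, which has $0$ on its boundary and carries the chosen continuous determination of $\log(\zeta-z)$.

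Applying Lemma~\ref{lem:regul} then yields $c \in \GACKdei$, $\sigma \in \N$ and $\tau \in \Q$ such that $f(z) = c\,(\log(\zeta-z))^\sigma (\zeta-z)^\tau (1+o(1))$ as $z \to \zeta$ with $z \in \calD$, which is the claimed statement. I do not expect any serious obstacle here: the content is entirely in the two invoked results, and the only points requiring a line of care are checking that the regrouped coefficients indeed lie in the ring $\GACKdei$ (immediate from Lemma~\ref{lem:algebra}$(i)$ and the finiteness of $S$ and $T$) and that the analytic setting — a punctured neighborhood of $\zeta$ intersected with $\calD$, with a fixed branch of $\log(\zeta-z)$ — matches the hypotheses on $\Omega$ in Lemma~\ref{lem:regul}. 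If $\zeta$ is not a singularity of $L$, the logarithmic and fractional-power terms disappear ($S_j = T_j = \{0\}$) and one simply recovers that $f$ is holomorphic at $\zeta$ with $f(\zeta) \in \GACKdei$, consistently with the statement taking $\sigma = \tau = 0$.
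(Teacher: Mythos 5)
Your argument is correct and is precisely the one the paper intends: the paper only remarks that Corollary~\ref{corf} "is a consequence of Theorem~\ref{theo:3} and Lemma~\ref{lem:regul}" (with the ring there denoted $\mathbb{A}$, misprinted $R$ in the text, taken to be $\GACKdei$), and you have supplied exactly that combination — expanding $f$ via Theorem~\ref{theo:ack}'s basis, inserting the connection constants from Theorem~\ref{theo:3}, regrouping with coefficients in $\GACKdei$, and invoking Lemma~\ref{lem:regul} in the variable $\zeta-z$ on the slit disk $D\cap\calD$.
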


\subsection{Wronskian of fuchsian equations} \label{ssec:wron}

Given a linear differential equation $L$ with coefficients in $\Qb(z)$, 
of order $\mu$ and with a basis of solutions 
$f_1, f_2, \ldots, f_\mu$, 
 the wronskian $W=W(f_1, \ldots, f_\mu)$ is the determinant 
$$
W(z)=\left\vert 
\begin{matrix} f_1(z) & f_2(z) &\cdots &f_\mu(z)
\\
 f_1^{(1)}(z) & f_2^{(1)}(z) &\cdots &f_\mu^{(1)}(z)
\\
\vdots & \vdots & \cdots & \vdots
\\
 f_1^{(\mu-1)}(z) & f_2^{(\mu-1)}(z) &\cdots &f_\mu^{(\mu-1)}(z)
\end{matrix}
\right\vert.
$$
The wronskian can be defined in a more intrinsic way as follows. We write $L$ as
$$
y^{(\mu)}(z)+a_{\mu-1}(z)y^{(\mu-1)}(z)+\cdots+a_1(z)y(z)=0
$$
where $a_j(z)\in \Qb(z)$, $j=1, \ldots, \mu-1$. 
Then $W(z)$ is a solution of the linear equation
\begin{equation} \label{eq50}
y'(z)=-a_{\mu-1}(z)y(z),
\end{equation}
hence 
$W(z)=\nu_0\exp\big(-\int a_{\mu-1}(z) dz\big)$.
The value of the constant $\nu_0$ is determined by the solutions 
$f_1, f_2, \ldots, f_\mu$.

\begin{lem}\label{lem:wron} Let $\K$, $f$, $L$, $\calD$, 
$\zeta$, $g_1$, \ldots, $g_{\mu}$ be as in Theorem~\ref{theo:3}. 
Then the wronskian $W(z)=W(g_1, \ldots, g _\mu)(z)$ is an algebraic function over 
$\Qbar (z)$, and its zeros and singularities lie among the poles of $a_{\mu-1}(z)$.
\end{lem}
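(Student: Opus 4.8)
The plan is to exploit the two descriptions of the wronskian given just before the statement. Write $L$ in normalized form $y^{(\mu)} + a_{\mu-1}(z) y^{(\mu-1)} + \cdots + a_0(z) y = 0$ with $a_j(z) \in \K(z) \subset \Qbar(z)$; this is legitimate because $L$ is the minimal operator killing the $G$-function $f$, so after dividing by the leading coefficient we get rational-function coefficients. By Abel's identity, the wronskian $W(z) = W(g_1,\ldots,g_\mu)(z)$ of any basis of solutions satisfies the first-order equation \eqref{eq50}, hence $W(z) = \nu_0 \exp\big(-\int a_{\mu-1}(z)\,\dd z\big)$ for some constant $\nu_0 \in \C$, which is nonzero because $(g_1,\ldots,g_\mu)$ is a basis (the $g_j$ are linearly independent solutions, so their wronskian does not vanish identically on the simply connected domain $\calD$).

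Next I would invoke Theorem~\ref{theo:ack}: it asserts that $L$ is fuchsian with rational exponents at each of its singularities $\xi_1,\ldots,\xi_p$ (and, classically, also at infinity). For a fuchsian operator the coefficient $a_{\mu-1}(z)$ has only simple poles, located among $\xi_1,\ldots,\xi_p$, and the residue of $a_{\mu-1}$ at $\xi_i$ is $-(e_{1,i} + \cdots + e_{\mu,i}) + \binom{\mu}{2}$ up to sign conventions, where the $e_{k,i}$ are the local exponents of $L$ at $\xi_i$; since these exponents are rational by Theorem~\ref{theo:ack}, each such residue is a rational number $r_i \in \Q$. Therefore $-\int a_{\mu-1}(z)\,\dd z = \sum_{i=1}^p r_i \log(z-\xi_i) + P(z)$ for some rational function $P(z)$ coming from the regular part of the partial-fraction decomposition of $-a_{\mu-1}$, so that
$$
W(z) = \nu_0\, e^{P(z)} \prod_{i=1}^p (z-\xi_i)^{r_i}.
$$
But $a_{\mu-1}(z)$ has no pole at infinity beyond order $1$ (fuchsianness at infinity), which forces the rational function $P'(z) = -a_{\mu-1}(z) - \sum_i r_i/(z-\xi_i)$ to vanish identically, i.e.\ $P$ is constant; absorbing $e^P$ into $\nu_0$, we get $W(z) = \nu_0 \prod_{i=1}^p (z-\xi_i)^{r_i}$ with $r_i \in \Q$. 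A finite product of rational powers of polynomials is algebraic over $\Qbar(z)$ (it satisfies $y^N = \nu_0^N \prod_i (z-\xi_i)^{N r_i} \in \Qbar(z)$ once $N$ clears the denominators of the $r_i$), which proves the first assertion. For the second, the zeros and poles of $W$ on $\calD \cup \{\xi_1,\ldots,\xi_p\}$ occur exactly at those $\xi_i$ with $r_i \neq 0$, hence among $\xi_1,\ldots,\xi_p$, which are precisely the poles of $a_{\mu-1}(z)$ together with possibly finitely many others; in fact the singular set of $L$ is contained in the pole set of $a_{\mu-1},\ldots,a_0$, and one checks $\xi_i$ is a genuine singularity iff some $a_j$ has a pole there. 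I would phrase the conclusion as: the zeros and singularities of $W$ lie among the poles of $a_{\mu-1}(z)$, reading off the exponent $r_i$ from the residue of $a_{\mu-1}$ at $\xi_i$.

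The main obstacle I expect is bookkeeping rather than conceptual: one must be careful that the "singularities of $L$" $\xi_1,\ldots,\xi_p$ as fixed in \S\ref{ssec:ack} really are among the poles of the normalized coefficients $a_j$, and that the exponent $r_i$ attached to $\xi_i$ in the wronskian is controlled — via the fuchsian relation between residues of $a_{\mu-1}$ and the local exponents — rather than being an independent unknown. The rationality of the $r_i$ is the one place where Theorem~\ref{theo:ack} is essential; everything else is Abel's identity plus the elementary fact that $z\mapsto \prod_i(z-\xi_i)^{r_i}$ with rational exponents is algebraic. I would therefore organize the write-up as: (1) normalize $L$; (2) apply Abel to get $W = \nu_0 \exp(-\int a_{\mu-1})$; (3) use fuchsianness + rational exponents to compute the integral explicitly as $\sum_i r_i\log(z-\xi_i)$ up to a constant; (4) conclude algebraicity and locate the zeros/poles.
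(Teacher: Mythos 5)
Your step (1)--(3) recovers the paper's first half: from Abel's identity and fuchsianness with rational exponents you get $W(z) = \nu_0 \prod_i (z-\xi_i)^{r_i}$ with $r_i \in \Q$ (the paper quotes this directly from Hille as $W(z) = \nu \prod_j (z-p_j)^{-r_j}$). But there is a genuine gap in your step (4): you conclude algebraicity of $W$ over $\Qbar(z)$ from ``$y^N = \nu_0^N \prod_i (z-\xi_i)^{Nr_i} \in \Qbar(z)$.'' That inclusion is false unless $\nu_0 \in \Qbar$, which you have not established. If $\nu_0$ is a transcendental constant (say $\pi$), then $W$ is \emph{not} algebraic over $\Qbar(z)$: a constant lying in an algebraic extension of $\Qbar(z)$ must lie in $\Qbar$, since $\Qbar$ is the full field of constants of $\overline{\Qbar(z)}$. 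And $\nu_0$ is not automatically algebraic --- it depends on the chosen basis $(g_1,\ldots,g_\mu)$; replacing $g_1$ by $\pi g_1$ rescales $\nu_0$ by $\pi$. The whole content of the lemma is that for the \emph{specific} basis produced by Theorem~\ref{theo:ack} the constant is algebraic.

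This is precisely what the paper's second half proves, and it is the part your write-up omits. The paper expands $W(z)$ near $\zeta$ using the local form $g_j(z) = \sum_{s,t} (\log(\zeta-z))^s (\zeta-z)^t F_{s,t,j}(\zeta-z)$ from Theorem~\ref{theo:ack} (with $F_{s,t,j}$ $G$-functions with coefficients in $\K$), computes the $\mu \times \mu$ determinant of these expansions, and applies Lemma~\ref{lem:regul} to get $W(z) = c(\log(\zeta-z))^\sigma (\zeta-z)^\tau (1+o(1))$ with $c \in \K$. Comparing this against the known asymptotic $\widetilde c\,(\zeta-z)^{\widetilde\tau}(1+o(1))$ of $\prod_j (z-p_j)^{-r_j}$ near $\zeta$ (with $\widetilde c \in \Qbar^\star$) forces $\sigma=0$, $\tau=\widetilde\tau$, and $\nu_0 = c/\widetilde c \in \Qbar$. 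Without such an argument, your proof establishes only that $W$ is an algebraic function over $\C(z)$, not over $\Qbar(z)$, and the second assertion about zeros and poles being among the poles of $a_{\mu-1}$ still holds from your formula, but the first assertion --- the one actually used downstream in the proof of Theorem~\ref{theo:3} via $W(\rho_k)^{-1} \in \Qbar$ --- does not.
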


\begin{proof}
Since the differential equation~\eqref{eq50} is fuchsian, Equation~(5.1.16) 
in~\cite[p. 148]{hille} yields
 $
W(z)=\nu \prod_{j=1}^J(z-p_j)^{-r_j}
 $
where $p_1,\ldots,p_J \in \Qbar$ are the poles of $a_{\mu-1}(z)$ 
(which are simple because $L$ is fuschian), $r_1,\ldots,r_j \in \Q$ (because $L$ has rational 
exponents at its singularities), and $\nu \in \C\etoile$. It remains to prove that $\nu$ is algebraic.

With this aim in view, we compute the determinant $W(z)$ for $z \in \calD$ 
sufficiently close to $\zeta$ by means of the expansions of $g_1$,\ldots, $g_\mu$ 
and their derivatives. This yields 
$$
W (z)=\sum_{s\in S }\sum_{t\in T } \big(\log(\zeta - z)\big)^s (\zeta-z)^{t}F_{s,t }(\zeta-z)
$$
where $ S \subset \mathbb N$ and $ T \subset \mathbb Q$ are finite subsets, and 
the $F_{s,t }(z)$ are $G$-functions with coefficients in $\mathbb{K}$. Now 
Lemma~\ref{lem:regul} provides $c \in \K$, $\sigma \in \N$ and 
$\tau \in \Q$ such that, as $z \to \zeta$ with $z \in \calD$:
$$W(z)=  c \big(\log(\zeta - z)\big)^\sigma (\zeta-z)^{\tau} (1+o(1)).$$ 
On the other hand we also have 
$ \prod_{j=1}^J(z-p_j)^{-r_j} = \widetilde{c} (\zeta-z)^{\widetilde{\tau}} (1+o(1))$ for some 
$\widetilde{c} \in \Qbar\etoile$ and $\widetilde{\tau}\in \Q$. Since the quotient is a 
constant, namely $\nu$, taking limits as $z \to \zeta$ yields 
$\sigma = 0$, $\tau  = \widetilde{\tau}$  and $\nu = c/\widetilde{c} \in \Qbar$. This 
concludes the proof of Lemma~\ref{lem:wron}.
\end{proof}

\subsection{Proof of Theorem~\ref{theo:3}}

Let $R \geq 1$. For any $\xi \in (\calD \moins \{0,\zeta\})\cap \K(i)$, 
let $r_\xi> 0$ be the distance of $\xi$ to the border 
$\Delta_1\cup\ldots\cup\Delta_p$ of $\calD$ (with the notation 
of \S \ref{ssec:ack}), and $D_\xi$ be the open disk centered at 
$\xi$ of radius $r_\xi/R$. Since $\xi$ is not a singularity of $L$, 
there is a basis $g_{1,\xi}(z),\ldots, g_{\mu,\xi}(z)$ of solutions 
of $Ly=0$ consisting in $G$-functions in the variable $\xi - z$ 
with coefficients in $\K(i)$ (by Theorem~\ref{theo:ack}); 
these $G$-functions have \cvrs $\geq  r_\xi$, so that 
$g_{j,\xi}(z) \in \GACRKdei$ for any $z \in D_\xi \cap \K(i)$ and any $j$ 
(see Lemma~\ref{lemR}). 

Let $r_0 > 0$ be the \cvr of the $G$-function $f(z)$, and $D_0$ 
denote the open disk centered at 0 with radius $r_0/R$. Finally, 
for any $j \in \unmu$ we let $g_{j,\zeta}(z) = g_j(z)$; by 
assumption there exists $r_\zeta > 0$ such that 
$$
g_{j,\zeta}(z) = \sum_{s \in S_j} \sum_{t \in T_j} 
\big(\log(\zeta-z)\big)^s (\zeta-z)^t F_{s,t,j}(\zeta-z)
$$
for any $z \in \calD$ such that $|z-\zeta|< r_\zeta$, where 
$S_j \subset \N$ and $T_j \subset \Q$ are finite subsets and 
the $F_{s,t,j}$ are $G$-functions    with coefficients in $\K$ 
and \cvrs $\geq r_\zeta$. Then we let $D_\zeta$ be the open disk 
centered at $\zeta$ with radius $r_\zeta/R$, so that for any 
$z \in D_\zeta\cap \K(i)$ and any $j$ we have $g_{j,\zeta}(z) 
\in \GACRKdei$ by Lemmas~\ref{lemR},~\ref{prop:2} and~\ref{lem:log}.

\medskip

Following a smooth 
injective compact path from $0$ 
to $\zeta$ inside $\calD \cup \{0,\zeta\}$,
we can find $s-2$ points $\xi_2 , \ldots, \xi_{s-1} \in 
(\calD \moins \{0,\zeta\})\cap \K(i)$ (with $s \geq 3$) 
such that $D_{k-1} \cap D_k \neq \emptyset$ for any $k \in \deuxs$, 
where we let $D_k = D_{\xi_k}$ and $\xi_1 = 0$, $\xi_s = \zeta$.

\medskip

As in the beginning of \S \ref{ssec:thcc}, we have connection 
constants $\varpi_{j,2} \in \C$ such that
\begin{equation}\label{eq101}
f(z)=\sum_{j=1}^\mu\varpi_{j,2} \,g_{j,\xi_{2}}(z)
\end{equation}
for any $z\in \calD$. In the same way, for any $z\in \calD$, any 
$k \in \troiss$ and any $j \in \unmu$ we have
\begin{equation}\label{eq:prol}
g_{j,\xi_{k-1}}(z)=\sum_{\ell=1}^\mu\varpi_{j,k,\ell} \,g_{\ell,\xi_{k}}(z).
\end{equation}
Obviously the connection constants $\varpi_{j} \in \C$ in 
Theorem~\ref{theo:3} are obtained by making products of the 
vector $(\varpi_{j,2} )_{1\leq j \leq \mu}$ and the matrices 
$(\varpi_{j,k,\ell})_{1\leq j,\ell\leq \mu}$ (for $k \in \troiss$), 
because $g_{j,\xi_s}(z) = g_j(z)$. 
Since $\GACRKdei$ is a ring and $R \geq 1$ can be any real number, 
Theorem~\ref{theo:3} follows from the fact that all constants 
$ \varpi_{j,2}$ and $ \varpi_{j,k,\ell}$ in \eqref{eq101} and \eqref{eq:prol} 
belong to $\GACRKdei$. We will prove it now for \eqref{eq:prol}; the 
proof is similar for \eqref{eq101}.

Let $k \in \troiss$ and $j \in \unmu$. We differentiate $\mu-1$ times 
Equation~\eqref{eq:prol}, so that we get the $\mu$ equations
$$
g_{j,\xi_{k-1}}^{(s)}(z)=
\sum_{\ell=1}^\mu\varpi_{j,k,\ell} \, g_{\ell,\xi_{k}}^{(s)}(z), \quad s=0, \ldots, \mu-1.
$$
We choose $z = \rho_k \in D_{k-1} \cap D_k \cap \K(i)$ outside the poles 
of $a_{\mu-1}(z)$ (with the notation of \S\ref{ssec:wron}). 
Doing so yields a system of $\mu$ linear equations in the $\mu$ unknowns
$\varpi_{j,k,\ell}$, $\ell=1, \ldots, \mu$, 
which can be solved using Cramer's rule because the determinant of the system 
(namely $W(\rho_k)$, where $W(z)$ is the wronskian of $L$ built 
on the basis of solutions $g_{1,\xi_{k}}(z), 
\ldots, g_{\mu,\xi_{k}}(z)$) does not vanish, by Lemma~\ref{lem:wron}. 
Using again Lemma~\ref{lem:wron}, we have $W(\rho_k) \in \Qbar\etoile$ and therefore 
$\frac1{W(\rho_k)} \in \Qbar \subset \GACQdei \subset \GACKdei$ by  
Lemma~\ref{prop:2}. Now Cramer's rule yields
$$
\varpi_{j,k,\ell}=
\frac1{W(\rho_k)} 
 \left\vert 
\begin{matrix}
 g_{1,\xi_{k}}(\rho_k) &\cdots &g_{\ell-1,\xi_{k}}(\rho_k)&g_{j,\xi_{k-1}}(\rho_k) 
&g_{\ell+1,\xi_{k}}(\rho_k)&\cdots &g_{\mu,\xi_{k}}(\rho_k)
\\
g_{1,\xi_{k}}^{(1)}(\rho_k) &\cdots &g_{\ell-1,\xi_{k}}^{(1)}(\rho_k)
&g_{j,\xi_{k-1}}^{(1)}(\rho_k) &g_{\ell+1,\xi_{k}}^{(1)}(\rho_k)&\cdots 
&g_{\mu,\xi_{k}}^{(1)}(\rho_k)
\\
\vdots &\cdots & \vdots & \vdots & \vdots&\cdots&\vdots
\\
g_{1,\xi_{k}}^{(\mu-1)}(\rho_k) &\cdots &g_{\ell-1,\xi_{k}}^{(\mu-1)}(\rho_k)
&g_{j,\xi_{k-1}}^{(\mu-1)}(\rho_k) &g_{\ell+1,\xi_{k}}^{(\mu-1)}(\rho_k)
&\cdots &g_{\mu,\xi_{k}}^{(\mu-1)}(\rho_k)
\end{matrix}
\right\vert .
$$
Since $\rho_k \in D_{k-1} \cap D_k$, the entries  in this determinant  
belong to the ring $\GACRKdei$ (as noticed above),
 so that  $\varpi_{j,k,\ell} \in \GACRKdei$. This concludes the proof of Theorem~\ref{theo:3}.

\section{Proof of Theorem~\ref{theo:20}}\label{sec:prooftheo20}

The main part in the proof of Theorem~\ref{theo:20} is to prove that 
$\GCQbar \subset \GACQdei$; this will be done below. We deduce 
Theorem~\ref{theo:20} from this inclusion as follows, by 
Lemmas~\ref{lem:algebra} and~\ref{lem3nv}. If $\K \not\subset \R$, we have:
$$
\GCK \subset \GCQbar \subset \GACQdei \subset \GACK \subset \GCK 
$$
and Theorem~\ref{theo:20} follows. If  $ \K \subset \R$, we have:
$$
\GACK \subset  \GCQbar \cap \R \subset  \GACQdei   
\cap \R = \GACQ \subset \GACK$$
so that $\GACK = \GACQ$. The inclusion $\GCK \subset \GCQbar= \GACQ + i\GACQ$ is trivial; let us prove that $ \GACQ + i\GACQ \subset \GCK $. Let $\xi_1,\xi_2\in \GACQ$, and $f$, $g$, $h$  be $G$-functions with rational coefficients and radii of convergence $>2$ such that $f(1) = \xi_1$, $g(1) = \xi_2$, and $h(1) = \sqrt[4]{2}$. Then $k(z) = f(z) +g(z) h(z) \sqrt[4]{1-\frac{z}2} $ is a  $G$-function  with coefficients in $\Q \subset \K$, and $\xi_1+i\xi_2$ is the value at 1 of an analytic continuation of $k$ (obtained after a small loop around $z=2$). This concludes the proof that $\GCK = \GACQ + i\GACQ$ if $\K \subset \R$.

\medskip

The rest of the section is devoted to the proof that 
$\GC_{\Qb} \subset \GAC_{\mathbb{Q}(i)}$.
Let $\xi \in \GCQbar$; we may assume $\xi \neq 0$. There exists 
 a $G$-function $f(z)=\sum_{n=0}^{\infty} a_nz^n$ with coefficients 
$a_n \in \Qb$, and $z_0 \in \Qbar$, such that $\xi$ is one of the values 
at $z_0$ of the multivalued analytic continuation of $f$. Replacing $f(z)$
 with $f(z_0z)$, we may assume $z_0 = 1$. Let $L$ denote the minimal 
differential equation satisified by $f$, and $\xi_1, \ldots, \xi_p$ be 
the singularities of $L$. To keep the notation simple (and because the 
general case can be proved along the same lines), we shall assume that 
there is an open subset $\calD \subset \C$ (as in \S\ref{ssec:ack}) 
such that $1 \in \calD \cup \{\xi_1,\ldots,\xi_p\}$ and $\xi = f(1)$, 
where $f$ denotes the analytic continuation of the $G$-function 
$\sum a_n z^n$ to $\calD$. If 1 is a singularity of $L$ then $f(1)$ 
is the (necessarily finite) limit of $f(z)$ as $z \to 1$, $z\in \calD$. 

The coefficients $a_n$ $(n\ge 0)$ belong to a number field 
$\mathbb{K}=\mathbb{Q}(\beta)$ 
for some primitive element $\beta$ of degree $d$ say. We can 
assume without loss of generality 
that $\mathbb K$ is a Galois extension of $\Q$, i.e, that all 
Galois conjugates of $\beta$ are in $\mathbb K$. There exist 
$d$ sequences of rational 
numbers $(u_{j,n})_{n\ge 0}$, $j=0, \ldots, d-1,$ such that, for all $n\ge0$,
$
a_n= \sum_{j=0}^{d-1}u_{j,n}\beta^j
$
and thus  (at least formally)
\begin{equation}\label{eq:2}
f(z) = \sum_{n=0}^{\infty} a_nz^n  = \sum_{j=0}^{d-1}  \beta^j \sum_{n=0}^{\infty} u_{j,n}z^n  .
\end{equation}
The power series 
$U_j(z) = \sum_{n=0}^{\infty} u_{j,n}z^n$ are $G$-functions (see \cite{dgs}, Proposition VIII.1.4, p.~266), so that Equation~\eqref{eq:2} holds as soon as $|z|$ is sufficiently small. 
Moreover   $U_j$ has rational coefficients, so that it satisfies a  differential equation 
with coefficients in $\mathbb{Q}[z]$ (see for instance \cite{dgs}, Proposition VIII.2.1 $(iv)$, p.~268). We let  $L_j$ denote a minimal one,  
of order $\mu_j$.  Let $\calS_j$ denote the set of singularities of $L_j$,
 and $\calS = \calS_0 \cup \cdots\cup \calS_{d-1}$. Let $\Gamma$ denote a 
compact broken line without multiple points from 0 to 1 inside 
$\calD \cup \{0,1\}$. Since $\calS$ is a finite set, we may assume 
that $\Gamma \cap \calS \subset \{0,1\}$ and find a (small) simply 
connected open subset $\Omega \subset \C$ such that $\Gamma \moins 
\{0,1\} \subset \Omega \subset \calD \moins \{1\}$ and $\Omega 
\cap \calS = \emptyset$. If $\Gamma$ and $\Omega$  are chosen 
appropriately, it is possible 
to construct $\calD_0$, \ldots, 
$\calD_{d-1}$ as in \S\ref{ssec:ack} (with respect to $L_0$, \ldots, $L_{d-1}$) 
such that $\Omega \subset \calD_0 \cap \cdots \cap \calD_{d-1}$. Since  
$\Omega$ is simply connected  and $1\not\in \Omega$, we  choose a 
continuous determination of $\log(1-z)$ for $z\in \Omega$. Now Equation~\eqref{eq:2} 
holds in a neighborhood of 0, and 0 lies in the closure of $\Omega$ so that, 
by  analytic continuation, 
\begin{equation} \label{eq21}
f(z) = \sum_{j=0}^{d-1} \beta^j U_j(z) \mbox{ for any } z \in \Omega.
\end{equation}
We shall now expand this equality around the point 1, which lies also in 
the closure of $\Omega$. For any $j \in \zerodmu$, let 
$(g_{j,1}, \ldots, g_{j,\mu_j})$ denote a basis of solutions of the differential 
equation $L_j y =0$ provided by  Theorem~\ref{theo:ack} with $\zeta=1$. Then 
Theorem~\ref{theo:3} gives $ \varpi_{j,1}, \ldots,  \varpi_{j,\mu_j} \in \GACQdei$ 
such that $U_j (z) =  \varpi_{j,1} g_{j,1}(z)+ \cdots + \varpi_{j,\mu_j}  
g_{j,\mu_j} (z)$ for any $z\in \Omega$. Since $\beta^j \in \GACQdei$ by~\ref{prop:2}, 
Equation~\eqref{eq21} yields finite subsets $S \subset \N$ and $T  \subset \Q$ such that, 
for $z\in \Omega$ sufficiently close to 1:
$$f(z) =   \sum_{s \in S } \sum_{t \in T } \big(\log(1-z)\big)^s (1-z)^t F_{s,t }(1-z)
$$
where the functions $F_{s,t}(z)$ are holomorphic at 0 and have Taylor coefficients 
at 0 in $\GACQdei$. Then Lemma~\ref{lem:regul} gives $c \in \GACQdei$, 
$\sigma \in \N$ and $\tau \in \Q$ such that
$f(z) =  c \,  \big(\log (1-z)\big) ^\sigma (1-z)^\tau (1+o(1))$ 
  as $z \to 1 $ with $z \in \Omega$. Since $\displaystyle \lim_{z\to 1 } f(z) = \xi\neq 0$, 
we have $\sigma=\tau=0$ and $\xi = c \in \GACQdei$. This concludes the proof of Theorem~\ref{theo:20}.

\section{Rational approximations to quotients of values of $G$-functions}
\label{sec:prooftheo10}

This section is devoted to the proof of Theorem~\ref{theo:10}, in the following stronger form.  
Let $\K$ be an algebraic extension of $\Q$, and $\xi \in \C\etoile$; then 
the following statements are equivalent:
\begin{enumerate}
\item[$(i)$] We have $\xi \in \fff{\GACK}$.
\item[$(ii')$] There exist two sequences $(a_n)_{n \geq 0}$ and $(b_n)_{n \geq 0}$ 
of elements of $\K$ such that $\sum_{n=0}^\infty a_n z^n$ and 
$\sum_{n=0}^\infty b_n z^n$ are $G$-functions, $b_n \neq 0$ for infinitely  
many $n$ and $a_n - \xi b_n = o(b_n)$.
\item[$(iii')$] For any $R \geq 1$ there exist two $G$-functions 
$A(z) =  \sum_{n=0}^\infty a_n z^n$ and  $B(z) =  \sum_{n=0}^\infty b_n z^n$, 
with coefficients $a_n , b_n \in \K$ and \cvr $= 1$, such that 
$A(z) - \xi B(z)$ has \cvr  $> R$ and $a_n , b_n \neq 0$ for any $n$  sufficiently large.
\end{enumerate}

Since $(ii)$ (resp. $(iii)$) implies $(ii')$ and is implied by $(iii')$, this result
 contains  Theorem~\ref{theo:10}. The point in assertion $(ii')$ is that 
$b_n$ may vanish for infinitely  many $n$; by asking  
$a_n - \xi b_n = o(b_n)$ we require that $a_n = 0$ as soon as $b_n=0$ and $n$ is 
 sufficiently large.

Since $(iii')$ obviously implies $(ii')$, we shall prove that 
$(i) \Rightarrow (iii')$ and $(ii') \Rightarrow (i)$.

\subsection{Proof that $(i) \Rightarrow (iii')$}\label{ssec:13prime}

Let $\xi_1,\xi_2\in \GACK\moins\{0\}$ be such that $\xi = \xi_1/\xi_2$. 
Let $ R  \geq 1$, and $U(z) = \sum_{n=0}^\infty u_n z^n$, $V (z) = 
\sum_{n=0}^\infty v_n z^n$ be $G$-functions with coefficients in $\K$ and  
  \cvrs $>R$, such that $U(1) = \sum_{n=0}^\infty u_n  = \xi_1$ and  
$V(1) = \sum_{n=0}^\infty v_n  = \xi_2$. 

For any $n \geq 0$, let $a_n  =\sum_{k=0}^n u_k$ and $b_n  =\sum_{k=0}^n v_k$, 
$A(z)  = \sum_{n=0}^\infty a_n z^n$ and  $B(z)  = \sum_{n=0}^\infty b_n z^n$. 
Then $A(z)  = U(z) \sum_{n=0}^\infty  z^n = \frac{U(z)}{1-z}$ and 
$B(z)= \frac{V(z)}{1-z}$ are $G$-functions with coefficients in $\K$ and 
   \cvrs $=1$. Moreover $\displaystyle \lim_{n \to +\infty}a_n=\xi_1$ 
and $\displaystyle\lim_{n \to +\infty}b_n=\xi_2$ 
so that $a_n,b_n\neq 0$ for any $n$ sufficiently large, and 
$$
\big|a_n -  \xi b_n \big|= \big|(a_n -\xi_1) - \xi(b_n - \xi_2)\big|\leq 
\sum_{k=n+1}^\infty |u_k| + |\xi| \sum_{k=n+1}^\infty |v_k| = \mathcal{O}\big(R^{-n}\big)
$$
because $u_n,v_n =  \mathcal{O}(R^{-n})$ as $n \to +\infty$ and we may assume 
$R \geq 2$. Therefore $A(z) -\xi B(z)$ has \cvr $\geq R$, thereby concluding 
the proof  that $(i) \Rightarrow (iii')$.

\subsection{Proof that $(ii') \Rightarrow (i)$}\label{ssec:iiprimei}

Let $A(z) =  \sum_{n=0}^\infty a_n z^n$ and  $B(z) =  \sum_{n=0}^\infty b_n z^n$ 
be $G$-functions with coefficients in $\K$, such that $b_n \neq 0$ for 
infinitely  many $n$ and $a_n - \xi b_n = o(b_n)$. Since $\xi \neq 0$, we 
have $a_n \neq 0$  for infinitely  many $n$: none of $A(z)$ and $B(z)$ is  
a polynomial. Therefore these $G$-functions have finite positive radii of 
convergence, say $\rho$ and $\widetilde{\rho}$ respectively. 

Let us denote by $L$ the minimal differential equation over $\K[z]$ satisfied 
by $A(z)$, and by $\rho \ze_1$, \ldots, $\rho \ze_q$ the pairwise distinct 
singularities of $A(z)$ of modulus $\rho$ (so that $|\ze_1| = \ldots = |\ze_q| = 1$). 
Then we have $q \geq 1$, and all $\rho\ze_i$ are singularities of $L$ and are algebraic numbers.

Let $\theta_0 \in (-\pi/2, \pi/2)$ and $\Delta_0 = \{z\in \C, \, z = 1 \mbox{ or } 
\arg(z-1) \equiv \theta_0 \bmod 2\pi\}$. For any $i \in \unq$, let 
$\Delta_i = \rho\ze_i \Delta_0 = \{\rho \ze_i z, \, z \in \Delta_0\}$. 
Denoting by $\xi_1 = \rho \ze_1$, \ldots, $\xi_q = \rho \ze_q$, $\xi_{q+1}$, 
\ldots, $\xi_p$ the singularities of $L$, we may assume (by choosing 
$\theta_0$ properly) that $\Delta_1$, \ldots, $\Delta_q$ and some 
appropriate half-lines $\Delta_{q+1}$, \ldots, $\Delta_p$ satisfy the 
assumptions made at the beginning of \S \ref{ssec:ack}, so that we can take $\calD = 
\C \setminus (\Delta_1 \cup  \cdots \cup  \Delta_p)$. Choosing arbitrary 
determinations for $\log (\rho \ze_i)$ ($i = 1, \ldots, q$), and also a 
continuous one for $\log z$ when $z\in \C \setminus \Delta_0$, we may 
define $\log(\rho\ze_i - z)$ to be $\log(\rho\ze_i) + \log\big(1-\frac{z}{\rho\ze_i}\big)$ 
for $z \in \calD$ sufficiently close to $\rho\ze_i$ (because 
$ \frac{1}{\rho\ze_i}\Delta_i = \Delta_0$). For any $i \in \unq$, Corollary~\ref{corf} 
yields $c_i \in \GACKdei \setminus\{0\}$, $\sigma_i \in \N$ and $\tau_i \in \Q$ such that 
\begin{align*}
A(z)  &= c_i \big(\log(\rho\ze_i-z)\big)^{\sigma_i} \big(\rho\ze_i-z\big)^{\tau_i}  ( 1+o(1))
\\
&= c_i (\rho\ze_i)^{\tau_i}  \bigg(\log\Big( 1 -\frac{z}{\rho\ze_i} 
\Big)\bigg)^{\sigma_i} \Big(1-\frac{z}{\rho\ze_i}\Big)^{\tau_i}  ( 1+o(1))
\end{align*}
as $z \to \rho\ze_i$ with $z \in \calD$. Replacing $A(z)$ and $B(z)$ with 
their $\ell$-th derivatives from the beginning, where $\ell$ is a sufficiently 
large integer, we may assume $\tau_1 < 0$ (because $\rho \ze_1$  is a singularity 
of $A(z)$). Let $\tau = \min(\tau_1,\ldots,\tau_q) < 0$, and $\sigma$ denote 
the maximal value of $\sigma_i$ among those indices $i$ such that 
$\tau_i = \tau$. Let $g(z) = (\log(1-z))^\sigma (1-z)^\tau$ for 
$z \in \C \setminus \Delta_0$, and $\cti_i =  c_i (\rho\ze_i)^{\tau_i} $ if 
$(\sigma_i, \tau_i) = (\sigma,\tau)$, $\cti_i = 0$ otherwise. Then 
$(\cti_1,\ldots,\cti_q) \neq (0,\ldots,0)$ and, for any $i \in\unq$, we 
have $\cti_i \in \GACKdei$ (by Lemma~\ref{prop:2}, because $\rho\ze_i \in\Qbar$). Finally, 
\begin{equation}\label{eq:asymA}
A(z) =  \cti_i g\Big(\frac{z}{\rho\ze_i}\Big) + o\left(g\Big(\frac{z}{\rho\ze_i}\Big)\right)
\end{equation}
as  $z \to \rho\ze_i$ with $z \in \calD$. We have checked all assumptions 
of Theorem VI.5 (\S VI.5, p. 398) of~\cite{flajolet} (see also~\cite{flajoletodlyzko}). 
This result enables one to transfer this estimate~\eqref{eq:asymA} around 
the singularities on the circle of convergence into an asymptotic estimate for the 
coefficients of $A(z)$, namely:
\begin{equation} \label{eqA}
a_n = \frac{(-1)^\sigma}{\Gamma(-\tau) }\cdot\frac{ (\log n)^\sigma }{\rho^{n} n^{\tau+1}}
\cdot \big(\chi_n + o(1)\big), \mbox{ with } \chi_n = \sum_{i=1}^q \cti_i \ze_i^{-n}.
\end{equation}
The same arguments with $B(z)$ provide $\widetilde{\rho}$, $\widetilde{\sigma}$, 
$\widetilde{\tau}$, $\widetilde{\ze}_1$, \ldots, 
$\widetilde{\ze}_{\widetilde{q}}$, $\widetilde{d}_1$, \ldots, $\widetilde{d}_{\widetilde{q}}$ 
such that
\begin{equation} \label{eqB}
b_n = \frac{(-1)^{\widetilde{\sigma}}}{\Gamma(-\widetilde{\tau}) } 
\cdot\frac{(\log n)^{\widetilde{\sigma}}}{\widetilde{\rho}^{n} 
n^{\widetilde{\tau}+1}} \cdot \big(\widetilde{\chi}_n + o(1)\big), 
\mbox{ with } \widetilde{\chi}_n = \sum_{i=1}^{\widetilde{q}} \widetilde{d}_i \widetilde{\ze} _i^{-n}.
\end{equation}
Let $\calN_0 = \{n\in \N, b_n=0\}$ and $\calN = \N \moins \calN_0$. 
By assumption $\calN$ is infinite, and $a_n = 0$ for any $n \in \calN_0$ 
sufficiently large. In what follows, we assume implicitly that $\calN_0$ 
is infinite (otherwise the proof is the same, and even easier since 
everything works as if $\calN_0 = \emptyset$ and $\calN = \N$). 

By Equations~\eqref{eqA} and~\eqref{eqB}, we have as $n \to +\infty$ 
with $n \in \calN$: 
\begin{equation} \label{eqAB}
\frac{a_n}{b_n} = (-1)^{\sigma-\widetilde{\sigma}}
\frac{\Gamma(-\widetilde{\tau})}{\Gamma(-\tau)} \cdot
\frac{\chi_n+o(1)}{\widetilde{\chi}_n+o(1)}\cdot 
\Big(\frac{\widetilde{\rho}}{\rho}\Big)^{n} n^{\widetilde{\tau}-\tau} (\log n)^{\sigma-\widetilde{\sigma}}.
\end{equation}
Now the left handside tends to $\xi \neq 0$ as $n \to +\infty$ with 
$n \in \calN$. If $(\rho,\sigma,\tau) \neq 
(\widetilde{\rho},\widetilde{\sigma},\widetilde{\tau})$ then 
$\big|\frac{\chi_n+o(1)}{\widetilde{\chi}_n+o(1)}\big|$ tends to 0  
or $+\infty$ as $n \to +\infty$ with $n \in \calN$. Since both $\chi_n$ and 
$\widetilde{\chi}_n$ are bounded, this implies that $\chi_n$ or $\widetilde{\chi}_n$ tends to 0  
as $n \to +\infty$ with $n \in \calN$. Since $\chi_n = o(1)$ and $\widetilde{\chi}_n = o(1)$ 
as $ n\to \infty$ with $n \in \calN_0$ (using~\eqref{eqA} and~\eqref{eqB}, 
because $a_n=b_n=0$ for $n\in \calN_0$ sufficiently large), we have 
$\displaystyle \lim_{n \to +\infty} \chi_n = 0$ or 
$\displaystyle \lim_{n \to +\infty} \widetilde{\chi}_n = 0$. By 
Lemma~\ref{lemrac}  this implies $\cti_1=\cdots=\cti_q=0$ or 
$\widetilde{d}_1=\cdots=\widetilde{d}_{\widetilde{q}}=0$, which is a contradiction.

Therefore we have $(\rho,\sigma,\tau)=(\widetilde{\rho},\widetilde{\sigma},
\widetilde{\tau})$ in Equation~\eqref{eqAB}, 
so that 
$\frac{a_n}{b_n} = \frac{\chi_n+o(1)}{\widetilde{\chi}_n+o(1)} $ as $n \to +\infty$ 
with $n \in \calN$. Therefore $ \frac{\chi_n - \xi \widetilde{\chi}_n+o(1)}{\widetilde{\chi}_n+o(1)} 
= \frac{a_n}{b_n} -\xi$ tends to 0 as  $n \to +\infty$ with $n \in \calN$. 
Since $\widetilde{\chi}_n $ is bounded, we deduce  
$\displaystyle \lim_{n \to +\infty} \chi_n- \xi \widetilde{\chi}_n = 0$  
   (using the fact  that  $\chi_n = o(1)$ and $\widetilde{\chi}_n = o(1)$ as $ n\to \infty$ 
with $n \in \calN_0$). Writing $\chi_n- \xi \widetilde{\chi}_n = \sum_{j=1}^t \kappa_j 
\omega_j^n$ where $\{\omega_1, \ldots, \omega_t\} = \{\ze_1^{-1},\ldots,  
\ze_q^{-1}, \widetilde{\ze}_1^{-1},\ldots,  \widetilde{\ze} _{\widetilde{q}}^{-1}\}$ with $\omega_1, \ldots, 
\omega_t$ pairwise distinct, Lemma~\ref{lemrac} yields $\kappa_1 
= \cdots = \kappa_t=0$. Reordering the $\zeta_j$'s and the $\omega_k$'s 
if necessary, we may assume that  $\cti_1 \neq 0$ and $\omega_1 = \ze_1^{-1}$.  
Then  $\kappa_1 = \cti_1 - \xi \widetilde{d}_i$ if there is a (necessarily unique) 
$i$ such that $\omega_1 = \widetilde{\ze} _i^{-1}$, and $\kappa_1 = \cti_1$ otherwise. 
Since $\kappa_1 =0 \neq\cti_1$, there is such an $i$ and it satisfies 
$\widetilde{d}_i \neq 0$ and $\xi = \cti_1 / \widetilde{d}_i \in \fff{\GACKdei}$. If 
$\K \not\subset \R $ then $\GACK  = \GACKdei$ by 
Theorem~\ref{theo:20}; otherwise we have $\xi \in \R \cap  \fff{\GACQbar} 
 = \fff{\GACQbarinterR} = \fff{\GACK }$  by Theorem~\ref{theo:20} 
and Lemma~\ref{lem:algebra}.
In both cases, this concludes the proof of Theorem~\ref{theo:10}.

\section{Perspectives} \label{sec:perspectives}

\subsection{Other classes of arithmetic power series}\label{ssec:misc3}

It is natural to wonder if the results presented in this paper can be adapted 
to other classes of arithmetic power series. The most natural class is that of 
$E$-functions, also introduced by Siegel in~\cite{siegel}. The definition of these 
functions (see the Introduction) is 
formally similar to that of $G$-functions, but of course the presence of $n!$ at the denominator 
of the Taylor coefficients changes drastically the properties 
of $E$-functions. An $E$-function is entire and Andr\'e proved 
in~\cite[Theorem 4.3]{andre2} that the only 
singularities of its minimal differential equation, which is no longer 
fuchsian in general, 
are $0$ (a regular singularity with rational exponents) and infinity (an irregular 
singularity in general). Like the set of $G$-functions, the set of 
$E$-functions enjoys certain stability properties (for instance, it is a ring). 

Let us define  $\mathbf{E}_{\K}$ as the set of all values at points 
in $\K$ (an algebraic extension of $\mathbb Q$) of $E$-functions with 
Taylor coefficients at $0$ in $\K$. This is the analogue of 
$\GAC_{\K}$ and it is a ring. However, it is not clear to us if    
an analogue of Theorem~\ref{theo:20} holds for $E$-functions.
For example, we don't know how to answer the following very 
simple questions:
\begin{enumerate}
\item[$\bullet$] Given any algebraic number $\al\neq 0$,  
is it possible to express $\exp(\al)$ as the value of an $E$-function with 
Taylor coefficients in $\mathbb{Q}(i)$?

\item[$\bullet$] Is it possible to express any algebraic number as the value 
of an $E$-function with Taylor coefficients in $\mathbb{Q}(i)$?
\end{enumerate}
The possibility of a result analogous to 
Theorem~\ref{theo:10} is also uncertain. It is easy to describe 
the limits of sequences $A_n/B_n$ where $A_n, B_n\in \K$, $B_n\neq 0$ for all large enough $n$ and 
$\sum_{n=0}^{\infty}A_nz^n$  and $\sum_{n=0}^{\infty}B_nz^n$ are $E$-functions. 
This is simply $\fff{\GACK}$, because the 
series $\sum_{n=0}^{\infty} n!A_nz^n$ and 
$\sum_{n=0}^{\infty} n!B_nz^n$ are $G$-functions, and conversely if  
$\sum_{n=0}^{\infty} a_nz^n$ is a $G$-function, then $\sum_{n=0}^{\infty} \frac{a_n}{n!}z^n$ is 
an $E$-function. This can hardly be the analogue we seek. We now observe that 
given an $E$-function $f(z)=\sum_{n=0}^{\infty} A_n z^n$, 
the sequence $p_n/q_n$, with $p_n=\sum_{k=0}^n A_k$ and $q_n=1$, tends to $f(1)$, but 
$\sum_{n=0}^{\infty}p_nz^n=\frac{f(z)}{1-z}$ is not an $E$-function and 
$\sum_{n=0}^{\infty}z^n=\frac{1}{1-z}$ is a $G$-function. Hence a result analogous to 
Theorem~\ref{theo:10} and involving  $\mathbf{E}_{\K}$ might be achieved by 
considering simultaneously $E$ and $G$-functions.  It is also possible that similar questions might be easier to answer in the larger class 
of {\em arithmetic Gevrey series} introduced by Andr\'e in~\cite{andre2, andre3}.

\subsection{Possible applications to irrationality questions} \label{ssec:dio}

The Diophantine theory of $E$-functions 
is well understood after the works of many authors, among which we may cite 
Siegel~\cite{siegel} and Shidlovskii~\cite{shid}, and more recently of 
Andr\'e~\cite{andre3} and Beukers~\cite{beukers3}. An $E$-function essentially takes transcendental 
values at all non-zero algebraic points, and the algebraic points where it may take 
an algebraic value are fully controlled {\em a priori}. 

This is far from true for a non-algebraic 
$G$-function. There are many examples in the literature of $G$-functions taking 
algebraic values at some algebraic points without an obvious reason, see for example~\cite{beukers2}. 
After the pioneering works of Galochkin~\cite{galosh} 
and Bombieri~\cite{bombieri}, it is known that, 
given a transcendental $G$-function $f$, if $\al$ is a non-zero algebraic 
number of modulus $\le c$, 
then $f(\al)$ cannot be an  
algebraic number of degree $\le d$. Here,  
 $c>0$ and $d\ge 1$ are explicit quantities that depend on $f$ and on the degree and height of $\al$. 
A typical example is that if $\al=1/q$ is  the inverse of 
an integer, then $f(\al)$ is an irrational number provided that $\vert q\vert \ge Q$ is 
sufficiently large in terms of   $f$. 
An important issue is that the constant $c$ is 
usually much smaller than the radius of 
convergence of $f$. 

Ap\'ery's proof of the irrationality of $\zeta(3)$  
is very different  because it involves evaluating a $G$-function on the border of its disk of  convergence. 
The starting point  of his method is given by Theorem~\ref{theo:10}: he 
constructs two sequences $(a_n)_{n\ge 0}$ 
and $(b_n)_{n\ge 0}$ of rational numbers, whose generating functions are 
$G$-functions~(\footnote{This was apparently first observed by Dwork in~\cite{dwork}; 
see also~\cite[\S 1.10]{SFbou} for references.}), 
and such that $a_n/b_n$ tends to 
$\zeta(3)$. To prove irrationality, more is needed, i.e., one also has 
to find a suitable common denominator $D_n$ of $a_n$ and $b_n$, and then prove that 
the linear form $D_na_n+D_nb_n\zeta(3)\in \Z+\Z \zeta(3)$ tends to $0$ 
without being equal to $0$. (In this case, $D_n=\lcm(1,2,\ldots, n)^3$.)
The growth of $D_n$ is usually the main problem in attempts at proving irrationality in 
Ap\'ery's style. Indeed, there exist many examples of values $f(\al)$ of a 
$G$-function $f$ at an algebraic point $\al$ having approximations 
in the sense of Theorem~\ref{theo:10}($iii$)  
(see~\cite{rivoal} for references), but the growth of the relevant 
denominators $D_n$ prevents one to prove irrationality when the modulus 
of $\al$ is too close to 
the radius of convergence of~$f$. For instance, this approach has 
failed so far to establish the irrationality of 
$\zeta(5)$ or of Catalan's constant $G=\sum_{n=0}^{\infty} \frac{(-1)^n}{(2n+1)^2}$.

In the following proposition, we explain in details how the  growth of $D_n$, 
the  radii of convergence and the  irrationality exponent $\mu(\xi)$ of $\xi$ are 
connected. Recall that $\mu(\xi)$ is the supremum of the set of real numbers $\mu$ 
such that, for infinitely many fractions $p/q$, $|\xi - p/q| < q^{-\mu}$. 
In particular $\xi$ is said to be a Liouville number if $\mu(\xi) = +\infty$.

\begin{prop} Let $\xi \in \GACQ$. Let $A(z)=\sum_{n=0}^\infty a_nz^n$ 
and $B(z)=\sum_{n=0}^\infty b_nz^n$ be $G$-functions, with rational 
coefficients and \cvrs $=r > 0$, such that $A(z)-\xi B(z)$ has a 
finite   \cvrvirg which is $\geq R > r$. Let $ C \geq 1$ be such 
that $a_n$ and $b_n$ have a common denominator $\leq C^{n(1+o(1))}$ (as $n \to +\infty$). Then:
\begin{itemize}
\item If $C< R$ then $\xi \not\in \Q $ and $\mu(\xi) \leq 1 - \frac{\log(C/r)}{\log(C/R)}$.
\item Necessarily $C \geq \sqrt{Rr}$.
\end{itemize}
\end{prop}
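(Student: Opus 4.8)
The strategy is to extract, from the singularity structure of $A(z)$ and $B(z)$, explicit rational approximations to $\xi$ and then to play the usual irrationality criterion against the available common denominator. By hypothesis both $A$ and $B$ have radius of convergence exactly $r$, while $E(z) := A(z) - \xi B(z)$ has radius of convergence $\geq R > r$. Fix any $\rho$ with $r < \rho < R$ (eventually we let $\rho \to R$). Cauchy's estimates applied to $E$ give $a_n - \xi b_n = O(\rho^{-n})$, so in particular $a_n/b_n \to \xi$ provided $b_n$ does not decay too fast; and Cauchy's estimates applied to $A$ and $B$ individually give $|a_n|, |b_n| \leq \kappa \sigma^{-n}$ for every $\sigma < r$. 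The genuinely useful lower bound runs the other way: since $r$ is the \emph{exact} radius of convergence of $B$, one cannot have $b_n = O(\sigma^{-n})$ for any $\sigma > r$; thus $\limsup_n |b_n|^{1/n} = 1/r$, so there is an infinite set $\mathcal{N}$ of indices $n$ with $|b_n| \geq r^{-n(1+o(1))}$ along $\mathcal{N}$.

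\textbf{The irrationality step.} Along $\mathcal{N}$, let $D_n$ be a common denominator of $a_n$ and $b_n$ with $D_n \leq C^{n(1+o(1))}$, and set $p_n = D_n a_n$, $q_n = D_n b_n$, both integers with $q_n \neq 0$. Then
\[
|q_n \xi - p_n| = D_n |b_n \xi - a_n| \leq C^{n(1+o(1))} \rho^{-n} = (C/\rho)^{n(1+o(1))},
\]
while $|q_n| = D_n |b_n| \leq C^{n(1+o(1))} r^{-n(1+o(1))}$ is automatic, and $|q_n| \geq D_n|b_n|$; to bound $|q_n|$ from above in a way that is useful for the exponent, use $|b_n| = O(r^{-n(1+o(1))})$ (valid always, since the radius is $r$), giving $|q_n| \leq (C/r)^{n(1+o(1))}$. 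If $C < \rho$, then $|q_n\xi - p_n| \to 0$ along $\mathcal{N}$; and because $|b_n\xi - a_n| \leq \rho^{-n}$ is strictly smaller than the typical size $r^{-n}$ of $b_n$, we have $b_n \xi - a_n \neq 0$ for $n$ large in $\mathcal{N}$ (otherwise $a_n = \xi b_n$ would force $|b_n| = |a_n|/|\xi| = O(\rho^{-n})$, contradicting $n \in \mathcal{N}$). Hence $q_n \xi - p_n$ is a nonzero rational integer tending to $0$, so $\xi \notin \Q$. Letting $\rho \to R$, the standard estimate of the irrationality exponent from a sequence with $|q_n\xi - p_n| \leq (C/R)^{n(1+o(1))}$ and $|q_n| \leq (C/r)^{n(1+o(1))}$ yields
\[
\mu(\xi) \leq 1 + \frac{\log(C/r)}{-\log(C/R)} = 1 - \frac{\log(C/r)}{\log(C/R)},
\]
since $\log(C/R) < 0$.

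\textbf{The unconditional bound $C \geq \sqrt{Rr}$.} Suppose for contradiction that $C < \sqrt{Rr}$. Since $r < R$ we have $C < \sqrt{Rr} < R$, so the first bullet applies and $\xi \notin \Q$ with $\mu(\xi) \leq 1 - \frac{\log(C/r)}{\log(C/R)}$. Now $C < \sqrt{Rr}$ is equivalent to $C/r < \sqrt{R/r} = (C/r)\cdot\sqrt{r/R}\cdot\ldots$; more transparently, $C^2 < Rr$ means $\log(C/r) + \log(C/R) < 0$, i.e. $\log(C/r) < -\log(C/R) = \log(R/C)$, so $\frac{\log(C/r)}{-\log(C/R)} < 1$, which gives $\mu(\xi) < 2$. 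But every irrational real number has irrationality exponent $\mu(\xi) \geq 2$ (Dirichlet), a contradiction. Hence $C \geq \sqrt{Rr}$, and this holds for all admissible $R$ (so in fact one may take the supremum over $R$ strictly less than the true radius of convergence of $A - \xi B$).

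\textbf{Expected main obstacle.} The delicate point is the lower bound $|b_n| \geq r^{-n(1+o(1))}$ along an infinite set: $\limsup |b_n|^{1/n} = 1/r$ only gives this along a subsequence, and one must check that this subsequence is compatible with the simultaneous use of the upper bound $|b_n| \leq r^{-n(1+o(1))}$ (which is global, hence no conflict) and with the non-vanishing of $b_n\xi - a_n$. A secondary subtlety is bookkeeping the $o(1)$ terms in the exponents uniformly so that letting $\rho \uparrow R$ is legitimate in the final exponent estimate; this is routine but must be done carefully so the $1 - \log(C/r)/\log(C/R)$ comes out with the right constants.
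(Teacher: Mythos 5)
There is a genuine gap in the irrationality–measure step. You pass to a subsequence $\mathcal N$ of indices where $|b_n|\geq r^{-n(1+o(1))}$, coming from $\limsup|b_n|^{1/n}=1/r$, and then try to run the standard exponent estimate. But that estimate requires approximations at essentially every scale, or at least along a sequence whose consecutive terms have bounded ratio. Nothing you have said controls the gaps in $\mathcal N$: if, say, $\mathcal N$ consists only of powers of two, the resulting bound on $\mu(\xi)$ picks up an extra factor of $2$ and the claimed inequality $\mu(\xi)\leq 1-\log(C/r)/\log(C/R)$ does not follow. You flag this yourself as the ``delicate point,'' but you do not resolve it. Moreover, your non-vanishing argument is circular: from $a_n=\xi b_n$ you cannot deduce $|b_n|=O(\rho^{-n})$, because you have no bound $|a_n|=O(\rho^{-n})$ with $\rho>r$; Cauchy only gives $|a_n|\leq r^{-n(1+o(1))}$, which is compatible with $n\in\mathcal N$. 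So $q_n\xi-p_n\neq 0$ along $\mathcal N$ is not established.

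The paper's proof sidesteps both problems with a different mechanism. Since $(a_n)$ and $(b_n)$ are Taylor coefficients of $G$-functions, they satisfy a common linear recurrence $P_0(n)u_n+\cdots+P_r(n)u_{n+r}=0$ with $P_j\in\Z[n]$. One then shows that for every sufficiently large $n$ the vectors $(p_n,q_n),\ldots,(p_{n+r},q_{n+r})$ span $\Q^2$: if they were all proportional to a fixed direction, the recurrence would propagate the proportionality to all larger indices, forcing $b_i-\lambda a_i=0$ for all $i\geq n$, hence $\lambda\xi=1$ and $\xi\in\Q$, contradicting the already-established irrationality (which uses only that $A-\xi B$ has finite radius of convergence, so $a_n-\xi b_n\neq 0$ infinitely often). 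This spanning property in windows of bounded length $r+1$ is exactly the hypothesis of Hata's Lemma~3.2, and then the exponent $1-\log(C/r')/\log(C/R)$ follows directly with $r'<r$ arbitrary. To repair your proof you would need to inject this linear-recurrence argument, or an equivalent device guaranteeing good, linearly independent approximations at all large $n$; the subsequence $\mathcal N$ alone is not enough.
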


\begin{proof}
The second assertion follows from the first one since $\mu(\xi) \geq 2$ 
for any $\xi \in \R \moins \Q$. Let us prove the first one.

Let $p_n = D_n a_n \in \Z$ and $q_n = D_n b_n \in \Z$, where $n$ is 
sufficiently large and  $D_n \in \Z$ is such that $1 \leq D_n \leq C^n$ 
(increasing $C$ slightly if necessary). Decreasing $R$ slightly if necessary, 
we may assume that    the  \cvr of  $A(z)-\xi B(z)$ is $> R$, so that 
$|q_n \xi - p_n| \leq (C/R)^n$ for any $n$ sufficiently large. Since 
$C< R$ and $q_n \xi - p_n \neq 0$ for infinitely many $n$ 
(because $A(z)-\xi B(z)$ has a finite   \cvrparfervir this implies 
$\xi \not\in \Q$. Moreover there exists a non-trivial linear recurrence 
relation $P_0(n) u_n + P_1(n) u_{n+1} + \ldots + P_r(n)u_{n+r} =0$, with 
coefficients $P_j (n)\in \Z[n]$, satisfied by both sequences $(a_n)_{n \geq 0}$ 
and $(b_n)_{n \geq 0}$. We claim that for any $n $ sufficiently large, the 
vectors $(p_n, q_n)$,  $(p_{n+1}, q_{n+1})$, \ldots, $(p_{n+r}, q_{n+r})$ 
span the $\Q$-vector space $\Q^2$. Using Lemma 3.2 in~\cite{hata}, this implies  
$\mu(\xi) \leq 1 - \frac{\log(C/r')}{\log(C/R)}$  for any $r' < r$, 
because $|p_n|, |q_n| \leq (C/r')^n$ for any $n$ sufficiently large. 
To prove the claim we argue by contradiction, and assume (permuting 
$(p_n)_{n \geq 0}$ and $(q_n)_{n \geq 0}$ if necessary) that for some  $\lambda \in \Q$ we 
have $q_k = \lambda p_k$ for any $k \in \{n,n+1,\ldots, n+r\}$. Then 
the sequence $(b_i - \lambda a_i)_{i \geq n}$ satisfies the above-mentioned 
recurrence relation, and its first $r+1$ terms vanish. If $n$ is sufficiently 
large then $P_r(i) \neq 0$ for any $i \geq n+r+1$ (because we may assume 
$P_r$ to be non-zero), so that $q_i - \lambda p_i =b_i - \lambda a_i = 0$ for any $i \geq n$. 
Since $\displaystyle \lim _{i \to + \infty}    q_i \xi - p_i = 0$  and $p_i \neq 0$ for 
infinitely many $n$, we deduce $\lambda \xi  =1$, in contradiction with 
the fact that $\xi\not\in \Q$. 
\end{proof}

\def\refname{Bibliography}

 S. Fischler, Equipe d'Arithm\'etique et de G\'eom\'etrie Alg\'ebrique, 
Universit\'e Paris-Sud, B\^atiment 425,
91405 Orsay Cedex, France

\medskip

 T. Rivoal, Universit\'e de Lyon, CNRS et Universit\'e Lyon 1
Institut Camille Jordan, B\^atiment Braconnier, 
43 boulevard du 11 novembre 1918, 
69622 Villeurbanne Cedex, France

\end{document}